\newtheorem{theorem}{Theorem}[section]
\newtheorem{proposition}[theorem]{Proposition}
\newtheorem{lemma}[theorem]{Lemma}
\newtheorem{corollary}[theorem]{Corollary}
\theoremstyle{definition}
\newtheorem{definition}[theorem]{Definition}
\newtheorem{problem}[theorem]{Problem}
\newtheorem{question}[theorem]{Question}
\theoremstyle{remark}
\newtheorem{remark}[theorem]{Remark}
\numberwithin{equation}{section}
\newcommand{\Z}{\mathbb{Z}}
\newcommand{\R}{\mathbb{R}}
\newcommand{\Conf}{\mathtt{Conf}}
\newcommand{\st}{\operatorname{st}}
\newcommand{\wgt}{\operatorname{wgt}}
\newcommand{\hocolim}{\operatorname{hocolim}}
\title[Tverberg's theorem for cell complexes]{Tverberg's theorem for cell complexes}
\author[S. Hasui]{Sho Hasui}
\address{Department of Mathematical Sciences, Osaka Prefecture University, Sakai, 599-8531, Japan}
\email{s.hasui@ms.osakafu-u.ac.jp}
\author[D. Kishimoto]{Daisuke Kishimoto}
\address{Department of Mathematics, Kyoto University, Kyoto, 606-8502, Japan}
\email{kishi@math.kyoto-u.ac.jp}
\author[M. Takeda]{Masahiro Takeda}
\address{Department of Mathematics, Kyoto University, Kyoto, 606-8502, Japan}
\email{takeda.masahiro.87u@st.kyoto-u.ac.jp}
\author{M. Tsutaya}
\address{Faculty of Mathematics, Kyushu University, Fukuoka 819-0395, Japan}
\email{tsutaya@math.kyushu-u.ac.jp}
\date{\today}
\subjclass[2010]{52A37, 55R80}
\keywords{topological Tverberg theorem, complementary acyclicity, simplicial sphere, discretized configuration space, homotopy colimit}
\begin{document}

\maketitle

\begin{abstract}
  The topological Tverberg theorem states that any continuous map of a $(d+1)(r-1)$-simplex into the Euclidean $d$-space maps some points from $r$ pairwise disjoint faces of the simplex to the same point whenever $r$ is a prime power. We substantially generalize this theorem to continuous maps of certain CW complexes, including simplicial $((d+1)(r-1)-1)$-spheres, into the Euclidean $d$-space. We also discuss the atomicity of the Tverberg property.
\end{abstract}


\section{Introduction}\label{introduction}

Tverberg's theorem \cite{T} states that any given $(d+1)(r-1)+1$ points in $\R^d$ can be a partitioned into $r$ disjoint subsets such that the convex hulls of these subsets have a point in common. This has been of great interest in combinatorics for over 50 years, and a variety of its generalization have been obtained. See comprehensive surveys \cite{BBZ,BS,BZ} for history and developments around Tverberg's theorem. Among others, a topological generalization of Tverberg's theorem was conjectures by B\'{a}r\'{a}ny in 1976, and was affirmatively solved as follows, which is now called the topological Tverberg theorem. Let $\Delta^n$ denote an $n$-simplex.

\begin{theorem}
  \label{TTT}
  If $r$ is a prime power, then for any continuous map $f\colon\Delta^{(d+1)(r-1)}\to\R^d$, there are pairwise disjoint faces $\sigma_1,\ldots,\sigma_r$ of $\Delta^{(d+1)(r-1)}$ such that $f(\sigma_1),\ldots,$
  $f(\sigma_r)$ have a point in common
\end{theorem}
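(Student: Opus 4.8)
The plan is to run the classical \emph{configuration space / test map} scheme that underlies the topological Tverberg theorem (proved by B\'{a}r\'{a}ny--Shlosman--Sz\H{u}cs for $r$ prime and by \"Ozaydin and by Volovikov for $r$ a prime power). Put $N=(d+1)(r-1)$ and let $K:=(\Delta^N)^{*r}_\Delta$ be the $r$-fold \emph{deleted join} of $\Delta^N$, i.e.\ the subcomplex of the $r$-fold join $(\Delta^N)^{*r}$ spanned by the joins $\sigma_1*\cdots*\sigma_r$ of pairwise disjoint faces of $\Delta^N$. Since $\Delta^N$ has $N+1$ vertices, $K$ is isomorphic to the $(N+1)$-fold join $[r]^{*(N+1)}$ of an $r$-point set; in particular $\dim K=N$ and $K$ is $(N-1)$-connected. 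The symmetric group $\mathfrak{S}_r$ acts on $K$ by permuting the $r$ join factors.

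First I would translate Tverberg partitions into coincidences. Define $\hat f\colon\Delta^N\to\R^{d+1}$ by $\hat f(x)=(f(x),1)$ and extend it join-wise to the continuous, $\mathfrak{S}_r$-equivariant map
\[
 F\colon K\longrightarrow(\R^{d+1})^r,\qquad F\bigl(t_1x_1+\cdots+t_rx_r\bigr)=\bigl(t_1\hat f(x_1),\ldots,t_r\hat f(x_r)\bigr),
\]
where $\mathfrak{S}_r$ permutes the $r$ coordinates of the target. Because the last coordinate of $\hat f$ is constantly $1$, a point of $K$ is carried by $F$ into the thin diagonal $D=\{(z,\ldots,z):z\in\R^{d+1}\}\subseteq(\R^{d+1})^r$ exactly when all the $t_i$ equal $1/r$ and $f(x_1)=\cdots=f(x_r)$ with the $x_i$ lying in pairwise disjoint faces of $\Delta^N$ --- that is, exactly when $f$ admits an $r$-fold Tverberg partition. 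Assume, for contradiction, that it does not; then $F$ avoids $D$, so composing $F$ with the $\mathfrak{S}_r$-equivariant orthogonal projection onto $D^\perp=W_r^{\oplus(d+1)}$, where $W_r=\{y\in\R^r:\sum_i y_i=0\}$, and then normalizing, produces an $\mathfrak{S}_r$-equivariant map
\[
 g\colon K\longrightarrow S\bigl(W_r^{\oplus(d+1)}\bigr),
\]
whose target is a sphere of dimension $(d+1)(r-1)-1=N-1$.

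It remains to contradict the existence of $g$ by playing the connectivity of the source off against the dimension of the target. If $r=p$ is prime, restrict both actions to the subgroup $\Z/p\subseteq\mathfrak{S}_r$ generated by a $p$-cycle: it acts freely on $K$ and freely on $S(W_p^{\oplus(d+1)})$ (a nontrivial cyclic shift fixes no nonzero vector of $W_p$), so Dold's theorem forbids a $\Z/p$-map from the $(N-1)$-connected free $\Z/p$-complex $K$ to the $(N-1)$-dimensional free $\Z/p$-complex $S(W_p^{\oplus(d+1)})$, a contradiction; this settles the case of prime $r$. If $r=p^k$ is a prime power, restrict instead to $G=(\Z/p)^k\subseteq\mathfrak{S}_r$ acting by translations on $\{1,\ldots,r\}=\mathbb{F}_p^k$: the action on $K$ is still free, but the action on $S(W_r^{\oplus(d+1)})$ is \emph{not} free once $k\ge2$ (a nontrivial translation fixes a nonzero linear subspace of $W_r$), so Dold's theorem no longer applies directly.

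I expect this last case to be the main obstacle. It calls for the \"Ozaydin--Volovikov lemma: for $G=(\Z/p)^k$ there is still no $G$-map from an $n$-connected free $G$-complex to a $G$-space $Y$ of dimension $\le n$, provided the fixed sets of $Y$ under the nontrivial subgroups of $G$ are sufficiently small --- a fact proved not by an elementary obstruction count but by an ideal-valued (Fadell--Husseini) index computation, equivalently by showing that the pertinent product of Euler classes of the nontrivial isotypic summands of $W_r^{\oplus(d+1)}$ is nonzero in $H^N(BG;\mathbb{F}_p)$, even though this ring need not be an integral domain. Applying the lemma with $X=K$ and $Y=S(W_r^{\oplus(d+1)})$ closes the prime-power case, and with it the theorem.
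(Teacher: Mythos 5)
Your argument is correct, but it takes a genuinely different route from the paper. You run the classical deleted-\emph{join} scheme: the $r$-fold $2$-wise deleted join of $\Delta^N$ is $[r]^{*(N+1)}$, whose $(N-1)$-connectedness is immediate, the ``append a coordinate $1$'' trick converts a Tverberg coincidence into a hit of the thin diagonal, and the endgame is Dold's theorem for $r$ prime and the \"Ozaydin--Volovikov index argument for $r=p^k$. The paper instead never touches the join: it states Theorem \ref{TTT} as known and recovers it from Theorem \ref{main} via the deleted \emph{product} $\Conf_r(\Delta^{(d+1)(r-1)})$, whose $(d(r-1)-1)$-acyclicity is established by decomposing $\Conf_r(X)$ as a homotopy colimit over the face poset (Theorem \ref{hocolim}) and running the Bousfield--Kan type spectral sequence (Lemma \ref{homology}, Proposition \ref{Conf acyclicity}); the equivariant contradiction is then obtained against $(\R^d)^r-\Delta\simeq S^{d(r-1)-1}$ using the weight $\wgt_G$ of Lemma \ref{weight}. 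The two endgames are essentially the same tool --- your ``\"Ozaydin--Volovikov lemma'' is exactly the content of Lemma \ref{weight}(2),(3), i.e.\ the Fadell--Husseini index computation, which both you and the paper treat as the one non-elementary input --- but the combinatorial setups differ in what they buy. Your join argument is shorter and self-contained precisely because $\Delta^N$ is a join of $N+1$ points, so connectivity of the configuration space comes for free; it does not extend to the general regular CW complexes of Theorem \ref{main}, where the deleted join has no such product structure. The paper's product approach costs the homotopy-colimit machinery but isolates the exact hypothesis (complementary acyclicity) under which the source is acyclic enough, which is the whole point of the generalization. Two small remarks on your write-up: Dold's theorem only needs freeness on the \emph{target} (your source is free too, which is harmless), and the standard form of Volovikov's criterion requires only that the full group $(\Z/p)^k$ act without fixed points on the cohomology sphere $S(W_r^{\oplus(d+1)})$, which holds here since no nonzero vector of $W_r$ is fixed by all translations; you do not need a condition on every nontrivial subgroup separately.
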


The topological Tverberg theorem was proved by B\'{a}r\'{a}ny, Shlosman and Sz\H{u}cs \cite{BSS} when $r$ is a prime, and by \"{O}zaydin \cite{O} and Volovoikov \cite{V1} when $r$ is a prime power. When $r$ is a prime, the proof employs a generalization of the Borsuk-Ulam theorem, and when $r$ is a prime power, Volovikov \cite{V1} employed a cohomological index, which is essentially the same as the ideal-valued cohomological index of Fadell and Husseini \cite{FH}. Remarkably, Frick \cite{F} proved that the condition that $r$ is a prime power is necessary.

Let us consider a generalization of the topological Tverberg theorem. In \cite{GS}, Tverberg asked whether or not it is possible to generalize the topological Tverberg theorem to continuous maps from $(d+1)(r-1)$-polytopes into $\R^d$. The answer is positive because the boundary of a convex $n$-polytope is a refinement of the boundary of an $n$-simplex as in \cite[p. 200]{G} and the result follows from the topological Tverberg theorem. Then Tverberg's question does not contribute to a proper generalization of the topological Tverberg theorem, and so we further ask:

\begin{question}
  \label{question}
  For which CW complexes can we generalize the topological Tverberg theorem to continuous maps from them into Euclidean spaces?
\end{question}

Recently, B\'{a}r\'{a}ny, Kalai and Meshulam \cite{BKM} and Blagojevi\'{c}, Haase and Ziegler \cite{BHZ} constructed affirmative examples of matroid complexes for Question \ref{question} in a purely combinatorial way. In this paper, we give a new affirmative class of regular CW complexes from a topological point of view, which will turn out to be substantial. To state the main theorem, we set notation and terminology. Let $X$ be a regular CW complex. A \emph{face} of $X$ means its closed cell. For faces $\sigma_1,\ldots,\sigma_k$ of $X$, let $X(\sigma_1,\ldots,\sigma_k)$ denote the subcomplex of $X$ consisting of faces which do not intersect with $\sigma_1,\ldots,\sigma_k$. Recall that a space $Y$ is called  \emph{$n$-acyclic} if $\widetilde{H}_*(Y)=0$ for $*\le n$. For convenience, a non-empty space will be called $(-1)$-acyclic, so that any $n$-acyclic space for $n\ge 0$ will be assumed non-empty. We define a regular CW complex that we are going to consider in this paper.

\begin{definition}
  We say that a regular CW complex $X$ is \emph{$k$-complementary $n$-acyclic} if $X(\sigma_1,\ldots,\sigma_i)$ is $(n-\dim\sigma_1-\cdots-\dim\sigma_i)$-acyclic for any pairwise disjoint faces $\sigma_1,\ldots,\sigma_i$ of $X$ such that $\dim\sigma_1+\cdots+\dim\sigma_i\le n+1$ and $0\le i\le k$.
\end{definition}

Now we state the main theorem.

\begin{theorem}
  \label{main}
  If $X$ is an $(r-1)$-complementary $(d(r-1)-1)$-acyclic regular CW complex and $r$ is a prime power, then for any continuous map $f\colon X\to\R^d$, there are pairwise disjoint faces $\sigma_1,\ldots,\sigma_r$ of $X$ such that $f(\sigma_1),\ldots,f(\sigma_r)$ have a point in common.
\end{theorem}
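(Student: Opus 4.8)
The plan is to follow the classical configuration-space/test-map scheme, replacing the simplex $\Delta^{(d+1)(r-1)}$ by the regular CW complex $X$ and the deleted join by an appropriate "discretized configuration space" of $X$. First I would set up the test map: given $f\colon X\to\R^d$, I want to rule out the existence of $r$ pairwise disjoint faces with overlapping images. Assume for contradiction that no such faces exist. Form the $r$-fold \emph{deleted product} (or rather the discretized $r$-fold configuration space) $\Conf_r(X)$, whose points are $r$-tuples of points lying in pairwise disjoint faces of $X$; the symmetric group $\mathfrak{S}_r$, and in particular a subgroup $G$ of order $p^m$ when $r=p^m$, acts freely on it. The map $(f,\ldots,f)$ followed by the projection to the sum-zero subspace $\{(y_1,\ldots,y_r)\in(\R^d)^r : \sum y_i=0\}$ gives, after the non-overlapping assumption, a $G$-equivariant map $\Conf_r(X)\to S(W_r^{\oplus ?})\simeq$ the sphere in the appropriate multiple of the standard $(r-1)(d)$-dimensional real representation; concretely the target is a $G$-sphere of dimension $d(r-1)-1$. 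The contradiction will come from showing no such equivariant map exists.

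The key topological input is a lower bound on the connectivity (acyclicity) of $\Conf_r(X)$ coming from the $k$-complementary $n$-acyclicity hypothesis. Here $k=r-1$ and $n=d(r-1)-1$. I would build $\Conf_r(X)$ (or a suitable model of it) as a homotopy colimit over a poset of chains of pairwise-disjoint faces, and use a Mayer–Vietoris / spectral sequence argument: the $i$-th "stratum" involves choosing $i$ faces $\sigma_1,\ldots,\sigma_i$ and then configuring the remaining points in the complement $X(\sigma_1,\ldots,\sigma_i)$, which by hypothesis is $(n-\sum\dim\sigma_j)$-acyclic whenever $\sum\dim\sigma_j\le n+1$. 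A bookkeeping argument—essentially the standard one showing the $r$-fold deleted join of an $n$-dimensional "enough-connected" complex is $(n+\cdots)$-connected—should yield that $\Conf_r(X)$ is $(d(r-1)-1)$-acyclic, hence in particular $(d(r-1)-1)$-connected after checking simple connectivity, or at least has trivial reduced homology through that range. This is the step I expect to be the main obstacle: carefully organizing the homotopy colimit decomposition of the discretized configuration space so that the complementary-acyclicity hypothesis feeds in stratum by stratum, and controlling the dimension/degree shifts (the $\dim\sigma_j$ terms) exactly so that the bound comes out to $d(r-1)-1$ and not off by one.

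With the acyclicity bound in hand, I would invoke the cohomological (ideal-valued) index of Fadell–Husseini, or equivalently Volovikov's index, exactly as in the prime-power case of the topological Tverberg theorem \cite{O,V1,FH}. The point is that a $G$-space which is $(d(r-1)-1)$-acyclic and on which $G=(\Z/p)^m$-ish acts freely has index at least that of a free $(d(r-1)-1)$-sphere, while the target $G$-sphere has dimension exactly $d(r-1)-1$; comparing indices (using that for $p$-toral groups the index of a free action on an $N$-acyclic space is controlled by $N$) gives that no $G$-equivariant map $\Conf_r(X)\to S^{d(r-1)-1}$ can exist, contradicting the existence of the test map. I would first treat $r=p$ a prime, where a Borsuk–Ulam-type argument via the Euler class of the vector bundle over $\Conf_r(X)/\mathfrak{S}_r$ suffices, and then handle general prime powers via the index argument; the reduction in both cases is formal once the connectivity of $\Conf_r(X)$ is established, so the real work—and the only genuinely new ingredient compared to the simplex case—is the homotopy-colimit connectivity estimate for the discretized configuration space of a $k$-complementary $n$-acyclic complex.
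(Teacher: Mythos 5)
Your proposal follows essentially the same route as the paper: reduce to the nonexistence of a $G$-equivariant map $\Conf_r(X)\to(\R^d)^r-\Delta\simeq S^{d(r-1)-1}$ for $G=(\Z/p)^k\le\Sigma_r$ via the Fadell--Husseini/Volovikov index, and establish the required $(d(r-1)-1)$-acyclicity of $\Conf_r(X)$ by decomposing it as a homotopy colimit over the face poset with strata $\Conf_{r-1}(X(\sigma))$, feeding in the complementary acyclicity through a Bousfield--Kan type spectral sequence. The bookkeeping you flag as the main obstacle is exactly what the paper carries out (by induction on $r$, one face at a time, showing $H_*(\hocolim F)\cong H_*(X)$ in the relevant range), and your outline of it is correct.
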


Since a $(d+1)$-simplex is $k$-complementary $(d-k)$-acyclic for $1\le k\le d+1$, the topological Tverberg theorem is recovered by Theorem \ref{main}. Moreover, we will prove that every simplicial $d$-sphere is $k$-complementary $(d-k)$-acyclic for $1\le k\le d+1$ (Proposition \ref{sphere}). Then we get:

\begin{corollary}
  \label{Tverberg sphere}
  If $S$ is a simplicial $((d+1)(r-1)-1)$-sphere and $r$ is a prime power, then for any continuous map $f\colon S\to\R^d$, there are pairwise disjoint faces $\sigma_1,\ldots,\sigma_r$ of $S$ such that $f(\sigma_1),\ldots,f(\sigma_r)$ have a point in common.
\end{corollary}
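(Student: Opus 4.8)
The plan is to obtain Corollary~\ref{Tverberg sphere} as an immediate specialization of Theorem~\ref{main}, with the complementary acyclicity hypothesis supplied by Proposition~\ref{sphere}.

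If $r=1$ the statement is vacuous (any single vertex works), so assume $r\ge 2$ and put $d'=(d+1)(r-1)-1$, so that $S$ is a simplicial $d'$-sphere. By Proposition~\ref{sphere}, $S$ is $k$-complementary $(d'-k)$-acyclic for every $k$ with $1\le k\le d'+1=(d+1)(r-1)$. The only point to check is that $k=r-1$ lies in this range, which holds since $r\ge 2$ and $d\ge 0$, and that for this value of $k$ the acyclicity index equals
\[
 d'-k=(d+1)(r-1)-1-(r-1)=d(r-1)-1 .
\]
Hence $S$ is $(r-1)$-complementary $(d(r-1)-1)$-acyclic.

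Now apply Theorem~\ref{main} with $X=S$ (a simplicial complex is in particular a regular CW complex): it produces pairwise disjoint faces $\sigma_1,\ldots,\sigma_r$ of $S$ with $f(\sigma_1)\cap\cdots\cap f(\sigma_r)\ne\emptyset$, which is exactly the assertion. Thus the deduction of the corollary is nothing more than bookkeeping of indices; the substantive ingredient is Proposition~\ref{sphere}, and I expect its proof to be the main obstacle — one must show that deleting from a simplicial sphere all faces that meet a given system of pairwise disjoint faces leaves a complex of the required connectivity, which I would attack through combinatorial Alexander duality identifying this subcomplex, up to homotopy, with the full subcomplex spanned by the complementary vertices.
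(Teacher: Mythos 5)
Your deduction is correct and is exactly how the paper obtains the corollary: specialize Proposition~\ref{sphere} to $k=r-1$ for a $((d+1)(r-1)-1)$-sphere, note $(d+1)(r-1)-1-(r-1)=d(r-1)-1$, and invoke Theorem~\ref{main}. Your guess about the proof of Proposition~\ref{sphere} (Alexander duality in the sphere applied to the union of open stars of the deleted vertices) also matches the paper's argument.
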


Let us go back to Tverberg's question mentioned above. As we saw above, the topological Tverberg theorem is generalized to maps out of polytopal spheres, where a polytopal sphere means the boundary of a convex polytope. However, a generalization to simplicial spheres has not been proved. Gr\"{u}nbaum and Sreedharan \cite{GSr} constructed a simplicial 3-sphere which is not polytopal. Moreover, Kalai \cite{K} proved that for $d$ large,  "most" simplicial $d$-spheres are not polytopal. Then Corollary \ref{Tverberg sphere}, hence Theorem \ref{main}, is a substantial generalization of the topological Tverberg theorem. On the other hand, (the failure of) Tverberg's question motivates us to consider the atomicity of the Tverberg property, that is, the Tverberg property which is not induced from subcomplexes or refinement. In Section \ref{atomicity}, we will pose a counting problem of atomic complexes having the Tverberg property, and we will give some computations in the special cases.

We sketch the outline of the proof of Theorem \ref{main}. We will apply the standard topological method in combinatorics to prove Theorem \ref{main} by using discretized configuration spaces, or deleted products, and the weight of a group action, which is essentially the same as Fadell and Husseini's ideal valued cohomological index \cite{FH}. Then the proof reduces to computing acyclicity of discretized configuration spaces. We will give a description of discretized configuration spaces in terms of homotopy colimits (Theorem \ref{hocolim}), which can be thought of as a combinatorial version of the Fadell-Neuwirth fibration \cite{FN} and has other applications such as \cite{KM}. This enables us to compute the acyclicity by the Bousfield-Kan type spectral sequence, which actually leads us to the complementary acyclicity.


\subsection*{Acknowledgement}

The authors were supported by JSPS KAKENHI Grant Numbers JP18K13414 (Hasui), JP17K05248 and JP19K03473 (Kishimoto), JP21J10117 (Takeda), and JP19K14535 (Tsutaya).


\section{Simplicial sphere}\label{simplicial sphere}

This section proves that every simplicial $d$-sphere is $k$-complementary $(d-k)$-acyclic for $1\le k\le d+1$. We will use the nerve theorem, so we set notation and terminology for it. Let $\mathcal{U}$ be an open cover of a topological space $X$. We say that $\mathcal{U}$ is \emph{good} if each intersection of finitely many members of $\mathcal{U}$ is either empty or contractible. Let $N(\mathcal{U})$ denote the nerve of $\mathcal{U}$, which is a simplicial complex such that vertices correspond to members of $\mathcal{U}$ and finitely many vertices form a simplex if the intersection of the corresponding members of $\mathcal{U}$ is non-empty. Now we state the nerve theorem. See \cite[Corollary 4G.3]{Ha} for the proof.

\begin{lemma}
  \label{nerve}
  If $\mathcal{U}$ is a good open cover of a paracompact space $X$, then
  \[
    X\simeq N(\mathcal{U}).
  \]
\end{lemma}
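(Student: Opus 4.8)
The plan is to realize $X$ and $N(\mathcal{U})$ as the two ends of a ``blow-up'' of the cover and to prove that each comparison map is a homotopy equivalence; this is the argument behind \cite[Corollary 4G.3]{Ha}, which I would phrase through a bar construction so as to match the homotopy-colimit methods used later in the paper. Fix a total order on the index set of $\mathcal{U}=\{U_i\}$, and for a tuple $i_0\le\cdots\le i_p$ write $U_{i_0\cdots i_p}=U_{i_0}\cap\cdots\cap U_{i_p}$. Consider the simplicial space $E_\bullet$ with
\[
  E_p=\coprod_{i_0\le\cdots\le i_p} U_{i_0\cdots i_p},
\]
the coproduct being over tuples whose intersection is non-empty, with face maps given by the inclusions $U_{i_0\cdots i_p}\hookrightarrow U_{i_0\cdots\widehat{i_j}\cdots i_p}$ obtained by omitting an index and degeneracies given by repeating an index. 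Let $B=|E_\bullet|$ be its geometric realization, a model for the relevant homotopy colimit. The whole proof then reduces to comparing $B$ with $X$ and with $N(\mathcal{U})$.

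First I would analyze the projection to the nerve. Collapsing each summand $U_{i_0\cdots i_p}$ to a point defines a map of simplicial spaces $E_\bullet\to N_\bullet$, where $N_\bullet$ is the simplicial set of ordered simplices of $N(\mathcal{U})$, so that $|N_\bullet|\cong N(\mathcal{U})$. By the goodness hypothesis every non-empty $U_{i_0\cdots i_p}$ is contractible, hence $E_\bullet\to N_\bullet$ is a levelwise homotopy equivalence; since both simplicial spaces are proper (their realizations are built by attaching cells along cofibrations), passing to realizations yields a homotopy equivalence $B\xrightarrow{\simeq}N(\mathcal{U})$.

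Next I would analyze the augmentation $\varepsilon\colon B\to X$ induced by the inclusions $U_{i_0\cdots i_p}\hookrightarrow X$. This is the step that uses that $\mathcal{U}$ covers $X$ and that $X$ is paracompact: a partition of unity $\{\phi_i\}$ subordinate to $\mathcal{U}$ is exactly the data needed to build a section of $\varepsilon$, since for $x\in X$ the support $\sigma(x)=\{i:\phi_i(x)>0\}$ spans a simplex of $N(\mathcal{U})$ and the assignment $x\mapsto\big((\phi_i(x))_{i\in\sigma(x)},x\big)$ lands in $B$; a linear homotopy within the simplices of $N(\mathcal{U})$ then contracts the resulting $s\varepsilon$ to the identity of $B$, so $\varepsilon$ is a homotopy equivalence. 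Composing the two equivalences gives $X\simeq B\simeq N(\mathcal{U})$.

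I expect the second step to be the main obstacle, and the only genuinely topological one: the assertion that $\varepsilon\colon B\to X$ is a homotopy equivalence is precisely where paracompactness is indispensable, as it is the existence of a subordinate partition of unity, equivalently the numerability of the cover, that produces the section and the homotopy. The first step, by contrast, is formal once one knows that the realization of a levelwise equivalence of proper simplicial spaces is an equivalence, whose proof is a skeletal induction using the gluing lemma for the cofibrations along which the realizations are built.
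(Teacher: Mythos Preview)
Your argument is correct and is precisely the one the paper intends: the paper does not give its own proof but simply refers to \cite[Corollary 4G.3]{Ha}, and what you have written is a faithful unpacking of that reference in homotopy-colimit language. There is nothing to correct or contrast.
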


Let $K$ be a simplicial complex. For a simplex $\sigma$ of $K$, let $\st(\sigma)$ denote the open star of $\sigma$. Namely, $\st(\sigma)$ is the union of the interiors of all simplices including $\sigma$. Note that every open star is contractible.

\begin{lemma}
  \label{cover}
  Let $K$ be a simplicial complex. For vertices $v_1,\ldots,v_n$ of $K$,
  \[
    \widetilde{H}^*(\st(v_1)\cup\cdots\cup\st(v_n))=0\quad(*\ge n-1).
  \]
\end{lemma}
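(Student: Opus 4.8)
The plan is to apply the nerve theorem (Lemma~\ref{nerve}) to the open cover $\mathcal{U}=\{\st(v_1),\ldots,\st(v_n)\}$ of $U:=\st(v_1)\cup\cdots\cup\st(v_n)$ and then to read off the conclusion from the dimension of the nerve. First I would check that $\mathcal{U}$ is a good cover: every point of $|K|$ lies in the interior of a unique simplex, its carrier, and a point with carrier $\tau$ lies in $\st(v_i)$ precisely when $v_i$ is a vertex of $\tau$. Hence, for $I\subseteq\{1,\ldots,n\}$, the intersection $\bigcap_{i\in I}\st(v_i)$ is empty unless $\{v_i\mid i\in I\}$ spans a simplex $\sigma_I$ of $K$, in which case it equals the open star $\st(\sigma_I)$, which is contractible. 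So the cover is good, and its nerve $N(\mathcal{U})$ is precisely the full subcomplex $L$ of $K$ spanned by $v_1,\ldots,v_n$, since $\{v_i\mid i\in I\}$ is a simplex of $N(\mathcal{U})$ iff $\bigcap_{i\in I}\st(v_i)\neq\emptyset$ iff it is a simplex of $K$. As $U$ is an open subset of the CW complex $|K|$, it is paracompact, so Lemma~\ref{nerve} yields a homotopy equivalence $U\simeq L$.

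It then remains to bound the cohomology of $L$, which has at most $n$ vertices and is therefore a subcomplex of the $(n-1)$-simplex $\Delta^{n-1}$ on the vertex set $\{v_1,\ldots,v_n\}$. If $L=\Delta^{n-1}$, then $U$ is contractible and $\widetilde{H}^*(U)=0$ for all $*$. Otherwise $L$ misses the unique top-dimensional simplex of $\Delta^{n-1}$, so $\dim L\le n-2$ and $\widetilde{H}^*(L)=0$ for $*\ge n-1$. In either case $\widetilde{H}^*(U)=\widetilde{H}^*(L)=0$ for $*\ge n-1$.

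The argument is short once the cover is set up; the points to get right are the identification of the finite intersections of the $\st(v_i)$ with open stars of simplices (so that the cover is good and its nerve is the full subcomplex on $v_1,\ldots,v_n$) and the elementary dimension count that forces either contractibility or $\dim L\le n-2$. The only technical nuisance is checking paracompactness of $U$ in order to invoke the nerve theorem, which is immediate since open subsets of CW complexes are paracompact.
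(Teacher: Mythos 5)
Your proof is correct and follows essentially the same route as the paper: the good cover by open stars, the nerve theorem, and the dimension count showing the nerve is either the full $(n-1)$-simplex (hence contractible) or of dimension at most $n-2$. The extra identification of the nerve with the full subcomplex spanned by $v_1,\ldots,v_n$ is a pleasant refinement but not needed for the conclusion.
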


\begin{proof}
  Let $U=\st(v_1)\cup\cdots\cup\st(v_n)$. For $n=1$, $U=\st(v_1)$ is contractible, so the statement is obvious. Then we may assume $n\ge 2$, and it is sufficient to show that the homotopy dimension of $U$ is at most $n-2$. We consider an open cover $\mathcal{U}=\{\st(v_i)\}_{i=1}^n$ of $U$. As in \cite[p. 372]{M}, if $\st(v_{i_1})\cap\cdots\cap\st(v_{i_k})\ne\emptyset$, then vertices $v_{i_1},\ldots,v_{i_k}$ form a simplex $\sigma$ of $K$ such that $\st(v_{i_1})\cap\cdots\cap\st(v_{i_k})=\st(\sigma)$, which is contractible. Thus $\mathcal{U}$ is a good open cover of $U$, and by Lemma \ref{nerve}, we get
  \[
    U\simeq N(\mathcal{U}).
  \]
  By definition, $N(\mathcal{U})$ is a simplicial complex with $n$ vertices, so $\dim N(\mathcal{U})\le n-1$. If $\dim N(\mathcal{U})=n-1$, $N(\mathcal{U})$ is a simplex which is contractible. Thus we obtain that the homotopy dimension of $N(\mathcal{U})$ is at most $n-2$, completing the proof.
\end{proof}

We are ready to prove:

\begin{proposition}
  \label{sphere}
  Every simplicial $d$-sphere is $k$-complementary $(d-k)$-acyclic for $1\le k\le d+1$.
\end{proposition}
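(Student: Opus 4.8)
The plan is to combine Alexander duality in $S^d$ with Lemma \ref{cover}. Let $S$ be a simplicial $d$-sphere with vertex set $V$, so that $|S|\cong S^d$ and $|V|\ge d+2$. Fix $k$ with $1\le k\le d+1$ and set $n=d-k$. Given pairwise disjoint faces $\sigma_1,\dots,\sigma_i$ of $S$ with $0\le i\le k$ and $m:=\dim\sigma_1+\cdots+\dim\sigma_i\le n+1$, we must show that $S(\sigma_1,\dots,\sigma_i)$ is $(n-m)$-acyclic. The case $i=0$ is just the assertion that $S$ is $(d-k)$-acyclic, which holds because $|S|\cong S^d$ and $k\ge 1$; so assume $i\ge 1$.

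First I would describe $S(\sigma_1,\dots,\sigma_i)$ combinatorially. Two simplices of $S$ are disjoint precisely when they share no vertex, so $S(\sigma_1,\dots,\sigma_i)$ is the full subcomplex $S_A$ of $S$ on $A:=V\setminus W$, where $W$ is the disjoint union of the vertex sets of $\sigma_1,\dots,\sigma_i$ (disjoint because the $\sigma_j$ are pairwise disjoint); here $|W|=\sum_j(\dim\sigma_j+1)=m+i$. Since $m\le n+1=d-k+1$ and $i\le k$, we get $|W|\le d+1<|V|$, so $A\ne\emptyset$; in particular $S_A$ is non-empty, which already settles the case $n-m=-1$. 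Working in barycentric coordinates one checks that scaling the $W$-coordinates to zero defines a deformation retraction of $|S|\setminus|S_W|$ onto $|S_A|$ (this is well defined off $|S_W|$ and keeps points off $|S_W|$), and, symmetrically, that scaling the $A$-coordinates to zero retracts $\bigcup_{v\in W}\st(v)$ onto $|S_W|$ (this is exactly the identification made in the proof of Lemma \ref{cover}).

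Now Alexander duality for the finite subcomplex $|S_W|\subseteq S^d\cong|S|$ gives
\[
  \widetilde H_j\big(S(\sigma_1,\dots,\sigma_i)\big)=\widetilde H_j\big(|S|\setminus|S_W|\big)\cong\widetilde H^{\,d-1-j}(|S_W|).
\]
By the second retraction $|S_W|\simeq\bigcup_{v\in W}\st(v)$, a union of $|W|=m+i$ open stars of $S$, so Lemma \ref{cover} yields $\widetilde H^{\,*}(|S_W|)=0$ for $*\ge m+i-1$. Hence $\widetilde H_j(S(\sigma_1,\dots,\sigma_i))=0$ whenever $d-1-j\ge m+i-1$, that is, whenever $j\le d-m-i$; and since $i\le k$ we have $d-m-i\ge d-m-k=n-m$. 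Thus $\widetilde H_j$ vanishes for all $j\le n-m$, which together with non-emptiness says $S(\sigma_1,\dots,\sigma_i)$ is $(n-m)$-acyclic, completing the proof.

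The only genuinely external input is that a simplicial $d$-sphere has at least $d+2$ vertices, used for the non-emptiness of $S_A$. The routine care goes into (i) checking that the barycentric-coordinate formulas really define deformation retractions onto the claimed subcomplexes — continuity, compatibility across simplices, and the fact that the homotopies stay in the correct subspace — and (ii) keeping the index bookkeeping in Alexander duality straight. I do not anticipate a serious obstacle: the heart of the matter is that in $S^d$ the complement of a full subcomplex $|S_W|$ is homotopy equivalent to the full subcomplex on the complementary vertex set, while $|S_W|$ itself, being covered by the bounded number $|W|\le d+1$ of open stars, has no reduced cohomology above degree $|W|-2$ — which is precisely what Lemma \ref{cover} records.
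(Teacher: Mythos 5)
Your proof is correct and follows essentially the same route as the paper: identify $S(\sigma_1,\dots,\sigma_i)$ with the complement of the union $U$ of the open stars of the vertices of the $\sigma_j$, apply Alexander duality in $S^d$, and bound the cohomology of that union by Lemma \ref{cover}. The only cosmetic difference is that you take the compact side of the duality to be the full subcomplex $S_W$ (onto which $U$ deformation retracts), whereas the paper shrinks $U$ to a closed, locally contractible core $C$; the numerology is identical.
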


\begin{proof}
  Let $S$ be a simplicial $d$-sphere, and let $\sigma_1,\ldots,\sigma_i$ be simplices of $S$ such that $\dim\sigma_1+\cdots+\dim\sigma_i\le d-k+1$ and $0\le i\le k$. Let $v_1,\ldots,v_m$ be vertices of $\sigma_1,\ldots,\sigma_i$. Then $m\le i+\dim\sigma_1+\cdots+\dim\sigma_i$. Since $S$ is a simplicial $d$-sphere, it has at least $d+2$ vertices. On the other hand, we have $m\le i+\dim\sigma_1+\cdots+\dim\sigma_i\le d-k+i+1\le d+1$. Then $S(\sigma_1,\ldots,\sigma_i)$ is non-empty. It remains to compute the homology of $S(\sigma_1,\ldots,\sigma_i)$. Let $U=\st(v_1)\cup\cdots\cup\st(v_m)$. Then we have
  \[
    S(\sigma_1,\ldots,\sigma_i)=S-U.
  \]
  By shrinking $U$ slightly, we get a closed and locally contractible subset $C$ of $U$ such that $C\simeq U$ and $S-C\simeq S-U$. By the Alexander duality, we have
  \[
    \widetilde{H}_{d-j-1}(S-C)\cong\widetilde{H}^j(C).
  \]
  By Lemma \ref{cover}, $\widetilde{H}^j(C)\cong\widetilde{H}^j(U)=0$ for $j\ge m-1$, and so
  \[
    \widetilde{H}_*(S(\sigma_1,\ldots,\sigma_i))=\widetilde{H}_*(S-U)\cong\widetilde{H}_*(S-C)=0
  \]
  for $*\le d-m$. Thus since $d-k-\dim\sigma_1-\cdots-\dim\sigma_i\le d-i-\dim\sigma_1-\cdots-\dim\sigma_i\le d-m$, the proof is finished.
\end{proof}

It is quite probable that every polyhedral $d$-sphere is $k$-complementary $(d-k)$-acyclic for $1\le k\le d+1$. However, complementary acyclicity of regular CW spheres seem more complicated. The minimal regular CW decomposition of a $d$-sphere $S^d=e_-^0\cup e_+^0\cup\cdots\cup e^d_-\cup e^d_+$ for $d\ge 1$ is not 1-complementary 0-acyclic because $S^d$ minus a 1-face is empty. On the other hand, there are non-polyhedral $d$-spheres which are $k$-complementary $(d-k)$-sphere for $1\le k\le d+1$. For example, the regular CW 2-sphere depicted below is $k$-complementary $(2-k)$-acyclic for $1\le k\le 3$. It would be interesting to find a condition for a regular CW $d$-sphere being $k$-complementary $(d-k)$-acyclic for $1\le k\le d+1$.

\begin{figure}[htbp]
  \centering
  \vspace{2mm}
  \includegraphics[width=3.2cm]{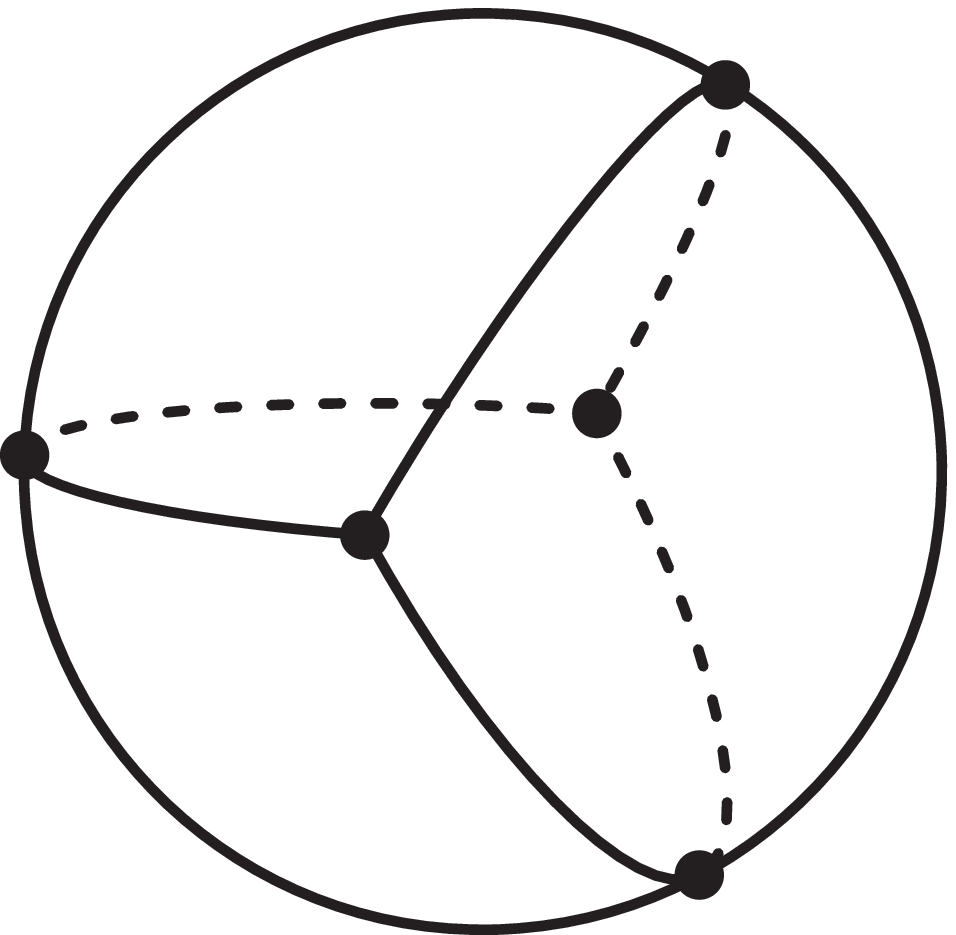}
  \caption{}
  \label{non-polyhedral}
\end{figure}


\section{Group action}\label{group action}

This section connects Theorem \ref{main} to equivariant topology by the standard topological method in combinatorics. Let $X$ be a regular CW complex. The \emph{discretized configuration space} $\Conf_r(X)$ is defined as the subcomplex of the direct product $X^r$ consisting of faces $\sigma_1\times\cdots\times\sigma_r$ such that $\sigma_1,\ldots,\sigma_r$ are pairwise disjoint faces of $X$. The discretized configuration space is often called the deleted product in combinatorics, alternatively. Let $\Delta=\{(x_1,\ldots,x_r)\in(\R^d)^r\mid x_1=\cdots=x_r\}$. There is a homotopy equivalence
\begin{equation}
  \label{R-D}
  (\R^d)^r-\Delta\simeq S^{d(r-1)-1}.
\end{equation}
Note that the symmetric group $\Sigma_r$ acts on $\Conf_r(X)$ and $(\R^d)^r-\Delta$ by permuting of entries. The following lemma is proved in \cite[Theorem 3.9]{BZ}.

\begin{lemma}
  \label{equivariant map}
  Let $X$ be a regular CW complex. If there is a continuous map $X\to\R^d$ such that $f(\sigma_1)\cap\cdots\cap f(\sigma_r)=\emptyset$ for all pairwise disjoint faces $\sigma_1,\ldots,\sigma_r$ of $X$, then there is a $\Sigma_r$-map
  \[
    \Conf_r(X)\to(\R^d)^r-\Delta.
  \]
\end{lemma}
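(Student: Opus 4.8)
The plan is to take the $r$-fold product map $f^r\colon X^r\to(\R^d)^r$ defined by $(x_1,\dots,x_r)\mapsto(f(x_1),\dots,f(x_r))$ and to show that its restriction to the subcomplex $\Conf_r(X)\subseteq X^r$ avoids the diagonal $\Delta$; this produces a continuous map $\Conf_r(X)\to(\R^d)^r-\Delta$, and it then remains only to observe that this map is $\Sigma_r$-equivariant.

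For the first step, let $p=(p_1,\dots,p_r)\in\Conf_r(X)$. By the definition of $\Conf_r(X)$, the point $p$ lies in some face $\sigma_1\times\cdots\times\sigma_r$ with $\sigma_1,\dots,\sigma_r$ pairwise disjoint faces of $X$, so $p_j\in\sigma_j$ for each $j$. If $f^r(p)$ were in $\Delta$, then $f(p_1)=\cdots=f(p_r)$, and this common value would lie in $f(\sigma_1)\cap\cdots\cap f(\sigma_r)$, contradicting the hypothesis on $f$. Hence $f^r(\Conf_r(X))\cap\Delta=\emptyset$, and since $f^r$ is continuous, restricting it gives a continuous map $\Conf_r(X)\to(\R^d)^r-\Delta$.

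For equivariance, recall that $\Sigma_r$ acts on $X^r$ and on $(\R^d)^r$ by permuting coordinates, and that $f^r$ intertwines these actions because applying $f$ coordinatewise commutes with permuting coordinates. The subcomplex $\Conf_r(X)$ is $\Sigma_r$-invariant, since a permutation of a tuple of pairwise disjoint faces is again a tuple of pairwise disjoint faces, and $\Delta$ is visibly $\Sigma_r$-invariant. Therefore restricting $f^r$ to $\Conf_r(X)$ and corestricting to $(\R^d)^r-\Delta$ yields a $\Sigma_r$-map $\Conf_r(X)\to(\R^d)^r-\Delta$, as required.

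The only point requiring any care — and it is not really an obstacle — is the identification of a point of $\Conf_r(X)$ with a tuple whose $j$-th entry lies in the closed cell $\sigma_j$. This is immediate once one recalls that the cells of the product regular CW complex $X^r$ are the products of cells of $X$, that $\Conf_r(X)$ consists precisely of the products $\sigma_1\times\cdots\times\sigma_r$ with the $\sigma_j$ pairwise disjoint, and that every point of a CW complex lies in the closure of the open cell containing it. Everything else in the argument is formal.
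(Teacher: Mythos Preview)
Your proof is correct and is exactly the standard argument: the paper itself does not give a proof but defers to \cite[Theorem 3.9]{BZ}, and the argument there is precisely the one you wrote---restrict the coordinatewise product map $f^r\colon X^r\to(\R^d)^r$ to $\Conf_r(X)$, use the hypothesis to see that the image misses $\Delta$, and observe that permutation of factors makes the map $\Sigma_r$-equivariant.
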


To apply Lemma \ref{equivariant map}, we will use the following invariant. Let $G=(\Z/p)^k$, and let $X$ be a $G$-space. The invariant $\wgt_G(X)$ is defined to be the greatest integer $n$ such that the natural map
\[
  H^n(BG;\Z/p)\to H^n(EG\times_GX;\Z/p)
\]
is injective. We call $\wgt_G(X)$ the \emph{weight} from the view of the category weight \cite{R}, instead of the over used "index" as in \cite{V1}. It is easy to see that $\wgt_G(X)$ is essentially the same as the ideal-valued cohomological index for $EG\times_GX$ due to Fadell and Husseini \cite{FH}. We will use the following properties of weights, which were also used in \cite{BZ,V1,V2}.

\begin{lemma}
  \label{weight}
  For $G=(\Z/p)^k$, the following statements hold.
  \begin{enumerate}
    \item If there is a $G$-map $X\to Y$ between $G$-spaces $X,Y$, then
    \[
      \wgt_G(X)\le\wgt_G(Y).
    \]

    \item If a $G$-space $X$ is $n$-acyclic, then
    \[
      \wgt_G(X)\ge n+1.
    \]

    \item If a paracompact space $S$ satisfies $H^*(S)\cong H^*(S^n)$ and $G$ acts on $S$ without fixed point, then
    \[
      \wgt_G(S)=n.
    \]
  \end{enumerate}
\end{lemma}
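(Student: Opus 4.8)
The plan is to prove each of the three items of Lemma \ref{weight} directly from the definition of $\wgt_G$ via the Borel construction, using standard properties of the cohomology of $BG=B(\Z/p)^k$ with $\Z/p$-coefficients.

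\medskip

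\noindent\textbf{Item (1).} First I would observe that a $G$-map $g\colon X\to Y$ induces a map of Borel constructions $\mathrm{id}\times_G g\colon EG\times_G X\to EG\times_G Y$ which is compatible with the projections to $BG$. Hence for every $n$ there is a commutative triangle relating $H^n(BG;\Z/p)\to H^n(EG\times_G Y;\Z/p)\to H^n(EG\times_G X;\Z/p)$, whose composite is the map defining $\wgt_G(X)$. If $H^n(BG;\Z/p)\to H^n(EG\times_G Y;\Z/p)$ fails to be injective then so does the composite; therefore $\wgt_G(X)\le\wgt_G(Y)$. This step is essentially formal.

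\medskip

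\noindent\textbf{Item (2).} If $X$ is $n$-acyclic then in particular $X$ is nonempty, and the fibration $X\to EG\times_G X\to BG$ has $n$-acyclic (hence, with $\Z/p$-coefficients, $n$-connected in the relevant range) fibre. I would run the Serre spectral sequence $E_2^{s,t}=H^s(BG;\mathcal{H}^t(X;\Z/p))\Rightarrow H^{s+t}(EG\times_G X;\Z/p)$: since $\widetilde H^t(X;\Z/p)=0$ for $t\le n$, the only contributions in total degree $\le n$ come from the row $t=0$, and the edge homomorphism $H^s(BG;\Z/p)\to H^s(EG\times_G X;\Z/p)$ is an isomorphism for $s\le n$ (the differentials into and out of the bottom row vanish in that range). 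In particular it is injective for $s\le n$, so $\wgt_G(X)\ge n+1$. One must be slightly careful when $n=-1$: then the claim is only $\wgt_G(X)\ge 0$, i.e. $H^0(BG;\Z/p)\to H^0(EG\times_G X;\Z/p)$ is injective, which holds because $X\neq\emptyset$ forces $EG\times_G X\neq\emptyset$.

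\medskip

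\noindent\textbf{Item (3).} Here the inequality $\wgt_G(S)\le n$ follows from the Serre spectral sequence of $S\to EG\times_G S\to BG$: since $H^*(S;\Z/p)\cong H^*(S^n;\Z/p)$ is concentrated in degrees $0$ and $n$, every class of $H^*(BG;\Z/p)$ in degree $>n$ is hit by a differential or dies — more precisely, the fundamental class $u\in H^n(S;\Z/p)\cong E_2^{0,n}$ must transgress to some $\tau\in H^{n+1}(BG;\Z/p)$, for otherwise $u$ would survive and, by multiplicativity, $EG\times_G S$ would have cohomology too large in high degrees compared with what a bundle with fibre $S$ and a free-enough action allows; the essential point is that $\tau\neq 0$, which is exactly where the hypothesis that $G$ acts without fixed point is used (via Lemma \ref{weight}(1) applied to a $G$-map from a free $G$-orbit, or equivalently via the localization theorem: a fixed-point-free action of $G=(\Z/p)^k$ on a finite-dimensional $S$ forces the equivariant cohomology $H^*_G(S;\Z/p)$ to be a torsion module over $H^*(BG;\Z/p)$, hence $\ker\big(H^*(BG;\Z/p)\to H^*_G(S;\Z/p)\big)$ is nonzero and, by the spectral sequence, first nonzero in degree $n+1$). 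This gives $\wgt_G(S)\le n$. The reverse inequality $\wgt_G(S)\ge n$ is the subtle half: I would argue that because the transgression $\tau$ is the only differential that can occur (the spectral sequence has just two nonzero rows $t=0,n$), the bottom-row edge map $H^s(BG;\Z/p)\to H^s(EG\times_G S;\Z/p)$ has kernel exactly the ideal generated by $\tau$ in degrees $\ge n+1$ and is injective for $s\le n$; hence $\wgt_G(S)=n$.

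\medskip

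\noindent I expect item (3), and specifically showing $\tau\neq 0$ (equivalently, that $S$ being a fixed-point-free $\Z/p$-power action forces the transgression of the fundamental class to be nonzero), to be the main obstacle; it is the only place where the group-theoretic hypothesis $G=(\Z/p)^k$ and freeness really enter, and it relies on the localization/Smith theory input rather than on formal spectral-sequence bookkeeping. The other two items are routine consequences of functoriality of the Borel construction and of the two-row structure of the relevant Serre spectral sequences.
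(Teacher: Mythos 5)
Your proposal follows essentially the same route as the paper: (1) is functoriality of the Borel construction, (2) is the Serre spectral sequence of $X\to EG\times_GX\to BG$, and (3) combines the two-row spectral sequence with the localization-theorem input (the paper cites Hsiang) that a fixed-point-free $(\Z/p)^k$-action forces $H^*(BG;\Z/p)\to H^*(EG\times_GS;\Z/p)$ to be non-injective, so that the transgression of the fundamental class is nonzero and injectivity first fails in degree $n+1$. Two small points to tighten: in (2) you need injectivity of the edge map in degree $n+1$, not merely for $s\le n$, to conclude $\wgt_G(X)\ge n+1$ --- your differential analysis does in fact give this, since no differential can hit $E_r^{n+1,0}$ when the rows $1\le t\le n$ vanish --- and in (3) one should verify, as the paper does, that the local coefficient system on $BG$ is trivial (there is no nontrivial homomorphism $(\Z/p)^k\to\mathrm{Aut}(\Z/p)\cong\Z/(p-1)$) before reading off the two-row $E_2$-page.
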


\begin{proof}
  (1) This is obvious from the definition of weights.

  \noindent(2) Consider the mod $p$ cohomology Serre spectral sequence for a fibration $X\to EG\times_GX\to BG$. Then since $X$ is $n$-acyclic, $E_2^{0,q}=0$ for $1\le q\le n$. Thus the map $H^*(BG;\Z/p)\to H^*(EG\times_GX;\Z/p)$ must be injective for $*\le n+1$, implying $\wgt_G(X)\ge n+1$.

  \noindent(3) By (2), we only need to prove $\wgt_G(S)\le n$. Since the $G$-action on $S$ is fixed point free, the map $H^*(BG;\Z/p)\to H^*(EG\times_GS;\Z/p)$ is not injective by \cite[Corollary 1, Chapter IV]{H}. Consider the mod $p$ cohomology Serre spectral sequence for a fibration $S\to EG\times_GS\to BG$. Since the group of automorphisms of $\Z/p$ is isomorphic to $\Z/(p-1)$ and there is no non-trivial homomorphism $G\to\Z/(p-1)$, the local coefficients in the spectral sequence is trivial. Then since the map $H^*(BG;\Z/p)\to H^*(EG\times_GS;\Z/p)$ is not injective, the transgression $E_n^{0,n}\to E_2^{n+1,0}$ must be non-trivial. Thus we obtain that the map $H^{n+1}(BG;\Z/p)\to H^{n+1}(EG\times_GS;\Z/p)$ is not injective, implying $\wgt_G(S)\le n$. Thus the proof is complete.
\end{proof}

Now we apply weights to Lemma \ref{equivariant map}. Let $r=p^k$ and $G=(\Z/p)^k$. The map
\[
  G\times G\to G,\quad(x,y)\mapsto x+y
\]
defines a faithful $G$-action on $G$ itself, so that we get a monomorphism $G\to\Sigma_r$. Then we can consider the induced $G$-action on a $\Sigma_r$-space through this monomorphism. We are ready to prove:

\begin{proposition}
  \label{criterion}
  Let $X$ be a regular CW complex such that $\Conf_r(X)$ is $(d(r-1)-1)$-acyclic. If $r$ is a prime power, then for any continuous map $f\colon X\to\R^d$, there are pairwise disjoint faces $\sigma_1,\ldots,\sigma_r$ of $X$ such that $f(\sigma_1),\ldots,f(\sigma_r)$ have a point in common.
\end{proposition}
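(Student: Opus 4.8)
The plan is to argue by contradiction using the equivariant machinery assembled above. Suppose that for some continuous map $f\colon X\to\R^d$ there are no pairwise disjoint faces $\sigma_1,\ldots,\sigma_r$ with $f(\sigma_1)\cap\cdots\cap f(\sigma_r)\ne\emptyset$; equivalently, $f(\sigma_1)\cap\cdots\cap f(\sigma_r)=\emptyset$ for every tuple of pairwise disjoint faces of $X$. Then Lemma \ref{equivariant map} supplies a $\Sigma_r$-map $\Conf_r(X)\to(\R^d)^r-\Delta$. Restricting along the monomorphism $G=(\Z/p)^k\hookrightarrow\Sigma_r$ described above (with $r=p^k$), we obtain a $G$-map $\Conf_r(X)\to(\R^d)^r-\Delta$, so by Lemma \ref{weight}(1),
\[
  \wgt_G(\Conf_r(X))\le\wgt_G((\R^d)^r-\Delta).
\]

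Next I would bound the two sides. For the left-hand side, the hypothesis that $\Conf_r(X)$ is $(d(r-1)-1)$-acyclic together with Lemma \ref{weight}(2) gives $\wgt_G(\Conf_r(X))\ge d(r-1)$. For the right-hand side, I would verify that $(\R^d)^r-\Delta$ satisfies the hypotheses of Lemma \ref{weight}(3) with $n=d(r-1)-1$: it is an open subset of a Euclidean space, hence paracompact; by \eqref{R-D} it has the cohomology of $S^{d(r-1)-1}$; and the $G$-action on it is fixed point free. The last point is the only genuinely new thing to check: a point of $(\R^d)^r$ fixed by all of $G$ must have all coordinates equal, since $G$ acts transitively on the index set $\{1,\ldots,r\}=G$ by translation, and such a point lies in $\Delta$. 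Hence $(\R^d)^r-\Delta$ has no $G$-fixed point and Lemma \ref{weight}(3) yields $\wgt_G((\R^d)^r-\Delta)=d(r-1)-1$.

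Combining the three facts gives $d(r-1)\le\wgt_G(\Conf_r(X))\le\wgt_G((\R^d)^r-\Delta)=d(r-1)-1$, a contradiction. Therefore no such map $f$ exists, which proves the proposition. I do not expect a serious obstacle here: the substantive content has already been packaged into Lemmas \ref{equivariant map} and \ref{weight}, and the only point requiring a short verification is the fixed-point-freeness of the translation $G$-action, which is immediate. The real work of the paper lies instead in establishing the acyclicity hypothesis on $\Conf_r(X)$ (via the homotopy colimit description and complementary acyclicity), which is handled separately.
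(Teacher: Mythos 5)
Your proposal is correct and follows essentially the same argument as the paper: contradiction via Lemma \ref{equivariant map}, then the three parts of Lemma \ref{weight} to obtain $d(r-1)\le d(r-1)-1$. Your explicit check that the translation action of $G$ on $(\R^d)^r-\Delta$ is fixed-point free is a detail the paper leaves implicit, but it is the right verification and does not change the route.
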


\begin{proof}
  Let $r=p^k$ and $G=(\Z/p)^k$. Suppose that all pairwise disjoint faces $\sigma_1,\ldots,\sigma_r$ of $X$ satisfy $f(\sigma_1)\cap\cdots\cap f(\sigma_r)=\emptyset$. Then by Lemma \ref{equivariant map}, there is a $G$-map $\Conf_r(X)\to(\R^d)^r-\Delta$, and so by Lemma \ref{weight}, we get
  \[
    d(r-1)\le\wgt_G(\Conf_r(X))\le\wgt_G((\R^d)^r-\Delta).
  \]
  On the other hand, by \eqref{R-D} and Lemma \ref{weight},
  \[
    \wgt_G((\R^d)^r-\Delta)\le d(r-1)-1.
  \]
  Thus we obtain a contradiction, finishing the proof.
\end{proof}


\section{homotopy colimit}\label{homotopy colimit}

This section describes a discrete configuration space in terms of a homotopy colimit, and proves Theorem \ref{main}. We recall from \cite{ZZ} the definition of a homotopy colimit of a functor from a poset. Let $P$ be a poset. Hereafter, we understand $P$ as a category such that objects are elements of $P$ and there is a unique morphism $x\to y$ for $x>y\in P$. For $x\in P$, let $P_{\le x}=\{y\in P\mid y\le x\}$. The order complex $\Delta(P)$ is the geometric realization of an abstract simplicial complex whose simplices are finite chains $x_0<x_1<\cdots<x_n$ in $P$. Let $F\colon P\to\mathbf{Top}$ be a functor. We define two maps
\[
  f,g\colon\coprod_{x<y\in P}\Delta(P_{\le x})\times F(y)\to\coprod_{x\in P}\Delta(P_{\le x})\times F(x)
\]
by
\[
  f=\coprod_{x<y\in P}1_{\Delta(P_{\le x})}\times F(y>x)\quad\text{and}\quad g=\coprod_{x<y\in P}\iota_{x,y}\times 1_{F(y)},
\]
where $\iota_{x,y}\colon\Delta(P_{\le x})\to\Delta(P_{\le y})$ denotes the inclusion for $x<y$. The homotopy colimit $\hocolim F$ is defined to be the coequalizer of $f$ and $g$. By definition, there is a natural projection
\begin{equation}
  \label{projection}
  \pi\colon\hocolim F\to\Delta(P)
\end{equation}

We recall a property of regular CW complexes that we are going to use. For a CW complex $X$, let $P(X)$ denote its face poset. The following lemma is proved in \cite[Theorem 1.6, Chapter III]{LW}.

\begin{lemma}
  \label{subdivision}
  Let $X$ be a regular CW complex. Then there is a homeomorphism
  \[
    \Delta(P(X))\xrightarrow{\cong}X
  \]
  which restricts to a homeomorphism $\Delta(P(X)_{\le\sigma})\xrightarrow{\cong}\sigma$ for each face $\sigma$.
\end{lemma}

Now we describe $\Conf_r(X)$ in terms of a homotopy colimit. Similarly to the Fadell-Neuwirth fibration \cite{FN}, we consider the first projection $\pi\colon\Conf_r(X)\to X$. Then for each face $\sigma$ of $X$, we have
\[
  \pi^{-1}(\mathrm{Int}(\sigma))=\Conf_{r-1}(X(\sigma)).
\]
Thus since $X(\sigma)\subset X(\tau)$ for $\sigma>\tau$, $\Conf_r(X)$ is obtained by gluing $\sigma\times\Conf_{r-1}(X(\sigma))$ along the inclusions
\[
  \sigma\times\Conf_{r-1}(X(\sigma))\leftarrow\tau\times\Conf_{r-1}(X(\sigma))\to\tau\times\Conf_{r-1}(X(\tau))
\]
for $\sigma>\tau$. In other words, $\Conf_r(X)$ is homeomorphic to the coequalizer of two maps
\[
  f,g\colon\coprod_{\tau<\sigma\in P(X)}\tau\times\Conf_{r-1}(X(\sigma))\to\coprod_{\tau\in P(X)}\tau\times\Conf_{r-1}(X(\tau))
\]
defined by
\[
  f=\coprod_{\tau<\sigma\in P(X)}1_{\tau}\times\theta_{\sigma,\tau}\quad\text{and}\quad g=\coprod_{\tau<\sigma\in P(X)}\iota_{\tau,\sigma}\times 1_{\Conf_{r-1}(X(\sigma))},
\]
where $\theta_{\sigma,\tau}\colon\Conf_{r-1}(X(\sigma))\to\Conf_{r-1}(X(\tau))$ and $\iota_{\tau,\sigma}\colon\tau\to\sigma$ are inclusions for $\sigma>\tau$. Now we define a functor $F_r\colon P(X)\to\mathbf{Top}$
by
\[
  F_r(\sigma)=\Conf_{r-1}(X(\sigma))\quad\text{and}\quad F(\sigma>\tau)=\theta_{\sigma,\tau}.
\]
By Lemma \ref{subdivision}, there is a natural homeomorphism $\Delta(P(X)_{\le\sigma})\cong\sigma$ for each face $\sigma$ of $X$. Then by the above observation, we get:

\begin{theorem}
  \label{hocolim}
  There is a homeomorphism
  \[
    \Conf_r(X)\cong\hocolim F_r.
  \]
\end{theorem}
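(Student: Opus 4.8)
The plan is to unwind the definitions on both sides and check that the coequalizer presentation of $\Conf_r(X)$ obtained in the paragraph preceding the statement is exactly the coequalizer defining $\hocolim F_r$, after translating each face $\sigma$ into the order complex $\Delta(P(X)_{\le\sigma})$ via Lemma \ref{subdivision}. Concretely, recall that $\hocolim F_r$ is the coequalizer of
\[
  f,g\colon\coprod_{\tau<\sigma\in P(X)}\Delta(P(X)_{\le\tau})\times F_r(\sigma)\rightrightarrows\coprod_{\tau\in P(X)}\Delta(P(X)_{\le\tau})\times F_r(\tau),
\]
with $f=\coprod 1_{\Delta(P(X)_{\le\tau})}\times F_r(\sigma>\tau)$ and $g=\coprod\iota_{\tau,\sigma}\times 1_{F_r(\sigma)}$. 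On the other hand, $\Conf_r(X)$ was identified with the coequalizer of the analogous pair of maps in which $\Delta(P(X)_{\le\tau})$ is replaced by the actual face $\tau$, $F_r(\sigma)=\Conf_{r-1}(X(\sigma))$, and $F_r(\sigma>\tau)=\theta_{\sigma,\tau}$. So the two diagrams differ only in the choice of model for each face.

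First I would invoke Lemma \ref{subdivision} to fix, for every face $\sigma$ of $X$, the homeomorphism $h_\sigma\colon\Delta(P(X)_{\le\sigma})\xrightarrow{\cong}\sigma$, and I would observe that this family is compatible with inclusions: for $\tau<\sigma$ the square relating $h_\tau$, $h_\sigma$, the simplicial inclusion $\Delta(P(X)_{\le\tau})\hookrightarrow\Delta(P(X)_{\le\sigma})$, and the cellular inclusion $\tau\hookrightarrow\sigma$ commutes, because the homeomorphism of Lemma \ref{subdivision} restricts cell-by-cell. Taking products with the identity of $\Conf_{r-1}(X(\sigma))$ (resp.\ $\Conf_{r-1}(X(\tau))$) then produces a pair of homeomorphisms between the indexing coproducts of the two coequalizer diagrams. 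Next I would check that these homeomorphisms intertwine both $f$'s and both $g$'s: intertwining the $g$'s is exactly the compatibility square above tensored with $1_{\Conf_{r-1}(X(\sigma))}$, and intertwining the $f$'s is immediate since $f$ acts only on the $\Conf$-factor, which is untouched by the $h_\sigma$. Finally, a map of reflexive coequalizer diagrams that is a homeomorphism on both the source and target coproducts induces a homeomorphism on coequalizers (coequalizers are computed as quotient spaces and the identifications correspond), giving the desired $\Conf_r(X)\cong\hocolim F_r$.

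The only genuinely substantive point — really the content already carried by the paragraph before the statement — is the homeomorphism $\Conf_r(X)\cong\operatorname{coeq}(f,g)$ with the face-model diagram, i.e.\ that gluing the pieces $\sigma\times\Conf_{r-1}(X(\sigma))$ along the maps $\sigma\times\Conf_{r-1}(X(\sigma))\leftarrow\tau\times\Conf_{r-1}(X(\sigma))\to\tau\times\Conf_{r-1}(X(\tau))$ recovers $\Conf_r(X)$. This in turn rests on the identity $\pi^{-1}(\mathrm{Int}(\sigma))=\Conf_{r-1}(X(\sigma))$ for the first projection $\pi\colon\Conf_r(X)\to X$ and on the inclusion relation $X(\sigma)\subseteq X(\tau)$ for $\sigma>\tau$, both of which follow directly from the definition of $\Conf_r$ as the subcomplex of $X^r$ spanned by products of pairwise disjoint faces. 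I would therefore expect the main (and essentially only) care to be needed in making the gluing description precise enough to recognize it literally as a coequalizer over $P(X)$ — once that is in hand, transporting along Lemma \ref{subdivision} is purely formal, and the statement follows.
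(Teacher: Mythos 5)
Your proposal is correct and follows essentially the same route as the paper: the paper likewise presents $\Conf_r(X)$ as the coequalizer of the face-model diagram via the first projection $\pi\colon\Conf_r(X)\to X$ and the identity $\pi^{-1}(\mathrm{Int}(\sigma))=\mathrm{Int}(\sigma)\times\Conf_{r-1}(X(\sigma))$, and then transports this to the homotopy colimit coequalizer using the natural homeomorphisms $\Delta(P(X)_{\le\sigma})\cong\sigma$ of Lemma \ref{subdivision}. Your additional care about the compatibility of these homeomorphisms with the inclusions and with the maps $f,g$ only makes explicit what the paper leaves to the word ``natural.''
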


We construct a spectral sequence which computes the homology of a homotopy colimit and is essentially the same as the Bousfield-Kan spectral sequence. Let $P$ be a poset. By definition, we have
\begin{equation}
  \label{cell decomp}
  \Delta(P)=\bigcup_{x\in P}\Delta(P_{\le x})
\end{equation}
so that there is a filtration of $P$
\begin{equation}
  \label{filtration}
  P_0\subset\cdots\subset P_n\subset P_{n+1}\subset\cdots,
\end{equation}
where $P_n$ is the union of all $\Delta(P_{\le x})$ for $\dim\Delta(P_{\le x})\le n$. Let $F\colon P\to\mathbf{Top}$ be a functor. Then by applying the projection \eqref{projection}, we get a filtration of $\hocolim F$
\[
  \pi^{-1}(P_0)\subset\cdots\subset\pi^{-1}(P_n)\subset\pi^{-1}(P_{n+1})\subset\cdots.
\]
Suppose that $P=P(X)$ for a regular CW complex $X$. We describe the $E^1$-term of the spectral sequence associated with the above filtration. By Lemma \ref{subdivision}, \eqref{cell decomp} is identified with the cell structure of $X$, and so the filtration \eqref{filtration} is identified with the skeletal filtration of $X$. Then we get
\[
  E^1_{p,q}=H_{p+q}(\pi^{-1}(P(X)_p),\pi^{-1}(P(X)_{p-1}))\cong\bigoplus_{\substack{\sigma\in P(X)\\\dim\sigma=p}}H_q(F(\sigma)).
\]
Summarizing, we obtain:

\begin{proposition}
  \label{spectral seq}
  Let $X$ be a regular CW complex, and let $F\colon P(X)\to\mathbf{Top}$ be a functor. Then there is a spectral sequence
  \[
    E^1_{p,q}\cong\bigoplus_{\substack{\sigma\in P(X)\\\dim\sigma=p}}H_q(F(\sigma))\quad\Longrightarrow\quad H_{p+q}(\hocolim F).
  \]
\end{proposition}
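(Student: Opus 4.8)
The plan is to realize the asserted spectral sequence as the homology spectral sequence of the filtered space $\hocolim F$ equipped with the exhaustive filtration $\pi^{-1}(P(X)_0)\subset\pi^{-1}(P(X)_1)\subset\cdots$ obtained from \eqref{filtration} by pulling back along the projection \eqref{projection}, and then to identify its $E^1$-page with $\bigoplus_{\dim\sigma=p}H_q(F(\sigma))$. By Lemma \ref{subdivision} the filtration $P(X)_0\subset P(X)_1\subset\cdots$ is, under the homeomorphism $\Delta(P(X))\cong X$, the skeletal filtration of $X$; thus $P(X)_p$ is obtained from $P(X)_{p-1}$ by attaching the closed cells $\overline{\sigma}=\Delta(P(X)_{\le\sigma})$ with $\dim\sigma=p$ along the inclusions $\partial\sigma=\Delta(P(X)_{<\sigma})\hookrightarrow\overline{\sigma}$, which are cofibrations. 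First I would argue that applying $\pi^{-1}$ converts this into a pushout presentation of $\pi^{-1}(P(X)_p)$ over $\pi^{-1}(P(X)_{p-1})$ along cofibrations (see the next paragraph for the local form of these attachments). Granting this, each inclusion $\pi^{-1}(P(X)_{p-1})\hookrightarrow\pi^{-1}(P(X)_p)$ is a cofibration, so the relative homology groups are computed by the filtration quotients; the filtration is exhaustive and bounded below, and homology commutes with the directed colimit, so the resulting spectral sequence converges to $H_*(\hocolim F)$. Up to reindexing this is the Bousfield--Kan spectral sequence of $F$.

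The substantive step is the computation of $\pi^{-1}(\overline{\sigma})$ and of the filtration quotient. Fix a face $\sigma$ with $\dim\sigma=p$. Since $\sigma$ is the top element of $P(X)_{\le\sigma}$, in the coequalizer defining $\hocolim(F|_{P(X)_{\le\sigma}})=\pi^{-1}(\overline{\sigma})$ I would separate the summand indexed by $x=\sigma$, namely $\Delta(P(X)_{\le\sigma})\times F(\sigma)=\overline{\sigma}\times F(\sigma)$, from the summands indexed by $x<\sigma$: the relations coming from pairs $x<y<\sigma$ assemble precisely $\hocolim(F|_{P(X)_{<\sigma}})=\pi^{-1}(\partial\sigma)$, while the relations coming from pairs $x<y=\sigma$ identify each subspace $\Delta(P(X)_{\le x})\times F(\sigma)$ of $\overline{\sigma}\times F(\sigma)$ with its image in $\pi^{-1}(\partial\sigma)$ under $1\times F(\sigma>x)$. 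Collecting these over all $x<\sigma$ yields one attaching map $\phi_\sigma\colon\partial\sigma\times F(\sigma)\to\pi^{-1}(\partial\sigma)$ and a homeomorphism $\pi^{-1}(\overline{\sigma})\cong\bigl(\overline{\sigma}\times F(\sigma)\bigr)\cup_{\phi_\sigma}\pi^{-1}(\partial\sigma)$. Gluing these pushouts onto $\pi^{-1}(P(X)_{p-1})$ over all $p$-dimensional $\sigma$ and passing to the quotient gives
\[
  \pi^{-1}(P(X)_p)/\pi^{-1}(P(X)_{p-1})\;\cong\;\bigvee_{\dim\sigma=p}\bigl(\overline{\sigma}\times F(\sigma)\bigr)\big/\bigl(\partial\sigma\times F(\sigma)\bigr)\;\cong\;\bigvee_{\dim\sigma=p}S^p\wedge F(\sigma)_+,
\]
where $\overline{\sigma}/\partial\sigma\cong S^p$ because $X$ is a regular CW complex. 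Taking reduced homology and using the suspension isomorphism, $E^1_{p,q}\cong\bigoplus_{\dim\sigma=p}\widetilde{H}_{p+q}(S^p\wedge F(\sigma)_+)\cong\bigoplus_{\dim\sigma=p}H_q(F(\sigma))$, which is the claim.

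I expect the main obstacle to be the bookkeeping behind the homeomorphism $\pi^{-1}(\overline{\sigma})\cong(\overline{\sigma}\times F(\sigma))\cup_{\phi_\sigma}\pi^{-1}(\partial\sigma)$: one has to check that the partial gluing maps $\Delta(P(X)_{\le x})\times F(\sigma)\to\pi^{-1}(\partial\sigma)$ attached to the various $x<\sigma$ agree on overlaps, so that they patch to a well-defined $\phi_\sigma$ on $\partial\sigma\times F(\sigma)=\bigl(\bigcup_{x<\sigma}\Delta(P(X)_{\le x})\bigr)\times F(\sigma)$. This compatibility is exactly the coherence encoded in $\hocolim$ — it comes from functoriality of $F$ and of $x\mapsto\Delta(P(X)_{\le x})$ together with $F(\sigma>x)=F(x'>x)\circ F(\sigma>x')$ for $x<x'<\sigma$ — but it is the one place where the coequalizer must be dismantled by hand. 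The remaining ingredients are formal: $\partial\sigma\hookrightarrow\overline{\sigma}$ is a cofibration for every regular CW complex, this is preserved under the product with $F(\sigma)$ and under pushout, hence it supplies all the cofibrations invoked above, and no hypothesis on the spaces $F(\sigma)$ beyond those implicit in $F\colon P(X)\to\mathbf{Top}$ is needed. The compatibility of the entire setup with Lemma \ref{subdivision} has already been recorded in the paragraphs preceding the statement.
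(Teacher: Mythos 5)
Your proposal is correct and takes essentially the same route as the paper: the spectral sequence of the filtration of $\hocolim F$ by the preimages $\pi^{-1}(P(X)_p)$ of the skeletal filtration (identified via Lemma \ref{subdivision}), with the $E^1$-term read off from the filtration quotients. The additional detail you supply --- the pushout presentation $\pi^{-1}(\overline{\sigma})\cong(\overline{\sigma}\times F(\sigma))\cup_{\phi_\sigma}\pi^{-1}(\partial\sigma)$ and the wedge decomposition $\bigvee_{\dim\sigma=p}S^p\wedge F(\sigma)_+$ of the quotients --- is precisely the verification the paper asserts without spelling out, and it is carried out correctly.
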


We compute the acyclicity of $\Conf_r(X)$ by using the above spectral sequence.

\begin{lemma}
  \label{homology}
  Let $X$ be a regular CW complex, and let $F\colon P(X)\to\mathbf{Top}$ be a functor such that $F(\sigma)$ is $(n-\dim\sigma)$ acyclic for each $\sigma\in P(X)$ with $\dim\sigma\le n+1$. Then there is an isomorphism for $*\le n$
  \[
    H_*(\hocolim F)\cong H_*(X)
  \]
\end{lemma}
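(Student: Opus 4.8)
The plan is to exploit the spectral sequence of Proposition \ref{spectral seq} applied to the functor $F$. The $E^1$-page is $E^1_{p,q}\cong\bigoplus_{\dim\sigma=p}H_q(F(\sigma))$, converging to $H_{p+q}(\hocolim F)$. The hypothesis that $F(\sigma)$ is $(n-\dim\sigma)$-acyclic for $\dim\sigma\le n+1$ means precisely that $\widetilde H_q(F(\sigma))=0$ for $q\le n-p$ when $p=\dim\sigma\le n+1$, while $H_0(F(\sigma))=\Z$ since an $(n-p)$-acyclic space with $n-p\ge -1$ is non-empty (and here $p\le n+1$ forces $n-p\ge -1$). So in the range $p+q\le n$ the only surviving entries on $E^1$ are the $q=0$ row, where $E^1_{p,0}\cong\bigoplus_{\dim\sigma=p}\Z$, with all entries $E^1_{p,q}$ for $1\le q$ and $p+q\le n$ vanishing.

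First I would observe that the $q=0$ row of the $E^1$-page, together with its differentials $d^1\colon E^1_{p,0}\to E^1_{p-1,0}$, is exactly the cellular chain complex of $X$: indeed $E^1_{p,0}=H_p(\pi^{-1}(P(X)_p),\pi^{-1}(P(X)_{p-1}))$ restricted to the $q=0$ part is $H_p(X_p,X_{p-1})$ via the identification of the filtration with the skeletal filtration (Lemma \ref{subdivision}), and one checks the $d^1$ differential agrees with the cellular boundary map — this is a naturality statement coming from the section $X\hookrightarrow\hocolim F$ picking out basepoints of each $F(\sigma)$, or more simply from the fact that $\pi^{-1}(P(X)_p)$ deformation retracts onto nothing in low degrees but its $q=0$ part is computed cellwise. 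Hence $E^2_{p,0}\cong H_p(X)$ for all $p$.

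Next I would run the convergence bookkeeping. Fix $*\le n$. On the $E^2$-page, in total degree $*\le n$, the only possibly nonzero group is $E^2_{*,0}\cong H_*(X)$, because every $E^1_{p,q}$ with $q\ge 1$ and $p+q\le n$ already vanished (using $p=p+q-q\le n-1\le n+1$ so the acyclicity hypothesis applies). Now I must argue that no later differential can alter $E^2_{*,0}$ for $*\le n$: a differential $d^s\colon E^s_{*,0}\to E^s_{*-s,s-1}$ has target in total degree $*-1\le n-1$, which lies in the vanishing range, so it is zero; a differential $d^s\colon E^s_{*+s,1-s}\to E^s_{*,0}$ has source with negative second index $1-s<0$, hence zero. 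Therefore $E^\infty_{*,0}\cong H_*(X)$ and $E^\infty_{p,q}=0$ for $p+q=*$, $q\ge 1$, so $H_*(\hocolim F)\cong E^\infty_{*,0}\cong H_*(X)$, and one should also note the reduced statement $\widetilde H_*$ matches since the $*=0$ case gives $H_0(\hocolim F)\cong H_0(X)$ with the augmentations compatible.

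The main obstacle I expect is the identification $E^2_{p,0}\cong H_p(X)$ — i.e.\ verifying that the $q=0$ row with its $d^1$ is genuinely the cellular chain complex of $X$ and not merely abstractly isomorphic to something with the right ranks. The cleanest way around this is to use naturality: the constant functor $\underline{\mathrm{pt}}$ with $\hocolim\underline{\mathrm{pt}}=\Delta(P(X))\cong X$ has the same spectral sequence concentrated on the $q=0$ row (which is then exactly $E^\infty$ and equals cellular chains of $X$), and the unique natural transformation $F\Rightarrow\underline{\mathrm{pt}}$ (well-defined because each $F(\sigma)$ is non-empty, though it requires a choice of basepoints compatible with the maps $\theta_{\sigma,\tau}$ — or one works at the level of $H_0$ where no choice is needed) induces a map of spectral sequences that is an isomorphism on $E^1$ in the $q=0$ row; this forces the $d^1$'s to agree and yields $E^2_{p,0}\cong H_p(X)$ for all $p$, not just $p\le n$. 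I would spell out just enough of this comparison to make the identification rigorous, then let the degree bookkeeping above finish the proof.
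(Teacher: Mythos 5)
Your proof is correct and follows essentially the same route as the paper: the spectral sequence of Proposition \ref{spectral seq}, vanishing above the $q=0$ row in total degree $\le n$, and identification of the bottom row with the cellular chain complex of $X$. The only point to tighten is the column $p=n+1$, where $(-1)$-acyclicity gives only non-emptiness, so $H_0(F(\sigma))$ need not be $\Z$ and $E^1_{n+1,0}$ need not equal $C_{n+1}$; but your comparison with the constant functor still works because $H_0(F(\sigma))\to\Z$ is surjective there, which is all that is needed to conclude that the image of $d^1$ in $E^1_{n,0}\cong C_n$ equals $\mathrm{im}\,\partial_{n+1}$ --- this is exactly how the paper disposes of that column.
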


\begin{proof}
  We consider the spectral sequence of Proposition \ref{spectral seq}. Then by assumption on a functor $F$,
  \[
    E^1_{p,q}\cong
    \begin{cases}
      C_p&q=0\\
      0&q>0
    \end{cases}
    \quad\text{and}\quad E^1_{n+1,0}\cong\bigoplus_kC_{n+1}
  \]
  for $p+q\le n$, where $k\ge 1$ and $C_*$ denotes the cellular chain complex of $\Delta(P(X))$ associated with the cell decomposition \eqref{cell decomp}. Then by the construction of the spectral sequence, $d^1\colon E^1_{p,0}\to E^1_{p-1,0}$ is identified with the boundary map $\partial\colon C_p\to C_{p-1}$ for $p\le n$ and the sum of copies of the boundary map $\bigoplus_k\partial\colon\bigoplus_kC_{n+1}\to C_n$ for $p=n+1$. Clearly, we have
  \[
    \mathrm{Im}\left\{\bigoplus_k\partial\colon\bigoplus_kC_{n+1}\to C_n\right\}=\mathrm{Im}\{\partial\colon C_{n+1}\to C_n\}.
  \]
  Then we get
  \[
    E^2_{p,q}\cong
    \begin{cases}
      H_p(\Delta(P(X)))&q=0\\
      0&q>0
    \end{cases}
  \]
  for $p+q\le n$. Thus we obtain $E^2_{p,q}\cong E^\infty_{p,q}$ for $p+q\le n$, and the extension of $E_\infty^{p,q}$ to $H_*(\hocolim F)$ is trivial for $p+q\le n$. Therefore by Lemma \ref{subdivision}, the proof is complete.
\end{proof}

\begin{proposition}
  \label{Conf acyclicity}
  If $X$ is an $(r-1)$-complementary $n$-acyclic regular CW complex, then $\Conf_r(X)$ is $n$-acyclic.
\end{proposition}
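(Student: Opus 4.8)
The plan is to prove Proposition \ref{Conf acyclicity} by induction on $r$, using Theorem \ref{hocolim} together with Lemma \ref{homology}. The base case $r=1$ is trivial: $\Conf_1(X)=X$, and the hypothesis that $X$ is $0$-complementary $n$-acyclic is exactly the statement that $X$ itself is $n$-acyclic (taking $i=0$ in the definition). So assume $r\ge 2$ and that the proposition holds for $r-1$.

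First I would observe that, by Theorem \ref{hocolim}, $\Conf_r(X)\cong\hocolim F_r$, where $F_r(\sigma)=\Conf_{r-1}(X(\sigma))$ for each face $\sigma$ of $X$. To apply Lemma \ref{homology} with this functor $F_r$, I need to check that $F_r(\sigma)=\Conf_{r-1}(X(\sigma))$ is $(n-\dim\sigma)$-acyclic for every $\sigma\in P(X)$ with $\dim\sigma\le n+1$. This is where the inductive hypothesis enters: the key claim is that for each such $\sigma$, the subcomplex $X(\sigma)$ is $(r-2)$-complementary $(n-\dim\sigma)$-acyclic, and then induction applied to $X(\sigma)$ (with the parameter $r-1$ and acyclicity level $n-\dim\sigma$) gives that $\Conf_{r-1}(X(\sigma))$ is $(n-\dim\sigma)$-acyclic, as required.

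The heart of the argument, and the step I expect to be the main obstacle, is verifying that claim about $X(\sigma)$. Unwinding definitions: I must show that for any pairwise disjoint faces $\tau_1,\ldots,\tau_j$ of $X(\sigma)$ with $\dim\tau_1+\cdots+\dim\tau_j\le (n-\dim\sigma)+1$ and $0\le j\le r-2$, the complex $X(\sigma)(\tau_1,\ldots,\tau_j)$ is $((n-\dim\sigma)-\dim\tau_1-\cdots-\dim\tau_j)$-acyclic. The point is that $X(\sigma)(\tau_1,\ldots,\tau_j)=X(\sigma,\tau_1,\ldots,\tau_j)$ since the faces not meeting $\sigma$ and not meeting any $\tau_\ell$ are exactly those not meeting $\sigma,\tau_1,\ldots,\tau_j$; and $\sigma,\tau_1,\ldots,\tau_j$ are pairwise disjoint faces of $X$ (the $\tau_\ell$ lie in $X(\sigma)$, hence are disjoint from $\sigma$). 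Now the total dimension is $\dim\sigma+\dim\tau_1+\cdots+\dim\tau_j\le \dim\sigma+(n-\dim\sigma)+1=n+1$, and the number of faces is $j+1\le r-1$, so the $(r-1)$-complementary $n$-acyclicity hypothesis on $X$ applies and tells us $X(\sigma,\tau_1,\ldots,\tau_j)$ is $(n-\dim\sigma-\dim\tau_1-\cdots-\dim\tau_j)$-acyclic — precisely what is needed. One should also double-check the boundary case $j=0$: it requires $X(\sigma)$ itself to be $(n-\dim\sigma)$-acyclic, which is the $i=1$ instance of the hypothesis on $X$ (valid since $\dim\sigma\le n+1$ and $1\le r-1$).

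Having established that $F_r$ satisfies the hypothesis of Lemma \ref{homology}, we conclude $H_*(\hocolim F_r)\cong H_*(X)$ for $*\le n$. Since $X$ is in particular $n$-acyclic (again the $i=0$ case of the hypothesis), we get $\widetilde H_*(\Conf_r(X))\cong\widetilde H_*(\hocolim F_r)=0$ for $*\le n$, i.e.\ $\Conf_r(X)$ is $n$-acyclic, completing the induction. I would also remark that the non-emptiness part of acyclicity (the $(-1)$-acyclic convention) is handled automatically: $X$ being $n$-acyclic with $n\ge -1$ forces $X\ne\emptyset$, and the spectral sequence comparison with $H_0(X)\ne 0$ propagates non-emptiness to $\Conf_r(X)$; more directly one can note that disjoint faces exist because the relevant $X(\sigma_1,\ldots,\sigma_i)$ are non-empty by hypothesis. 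The only genuinely delicate bookkeeping is the dimension/cardinality count in the previous paragraph, so I would write that out carefully.
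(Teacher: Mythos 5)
Your proposal is correct and follows essentially the same route as the paper: induction on $r$, identifying $\Conf_r(X)$ with $\hocolim F_r$ via Theorem \ref{hocolim}, verifying that $X(\sigma)$ is $(r-2)$-complementary $(n-\dim\sigma)$-acyclic so that the inductive hypothesis and Lemma \ref{homology} apply. You are in fact slightly more careful than the paper in spelling out the dimension/cardinality bookkeeping and in quantifying the induction over all acyclicity levels $n$ simultaneously.
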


\begin{proof}
  We induct on $r$. For $r=1$, $\Conf_1(X)=X$ is $n$-acyclic by assumption. Suppose that $\Conf_{r-1}(Y)$ is $n$-acyclic for any $(r-2)$-complementary $n$-acyclic regular CW complex $Y$. Since $X$ is $(r-1)$-complementary $n$-acyclic, $X(\sigma)$ is $(r-2)$-complementary $(n-\dim\sigma)$-acyclic for $\dim\sigma\le n+1$. Then $F_r(\sigma)=\Conf_{r-1}(X(\sigma))$ is $(n-\dim\sigma)$-acyclic for $\dim\sigma\le n+1$, and so by Theorem \ref{hocolim} and Lemma \ref{homology}, we obtain
  \[
    H_*(\Conf_r(X))\cong H_*(X)
  \]
  for $*\le n$. Thus since $X$ is $n$-acyclic, $\Conf_r(X)$ is $n$-acyclic, completing the induction.
\end{proof}

Now we are ready to prove Theorem \ref{main}.

\begin{proof}
  [Proof of Theorem \ref{main}]
  By Proposition \ref{Conf acyclicity}, $\Conf_r(X)$ is $(d(r-1)-1)$-acyclic. Then by Proposition \ref{criterion}, the proof is finished.
\end{proof}


\section{Atomicity}\label{atomicity}

Theorem \ref{main} shows that the Tverberg property is possessed not only by a simplex but also by a variety of CW complexes. But the Tverberg property of some CW complexes can be deduced from the that of other complexes. For example, as mentioned in Section \ref{introduction}, the Tverberg property of a polytopal sphere is deduced from a simplex. This section studies CW complexes having the Tverberg property that is not induced from other CW complexes.

We say that a regular CW complex $X$ is \emph{$(d,r)$-Tverberg} if for any continuous map $f\colon X\to\R^d$, there are pairwise disjoint faces $\sigma_1,\ldots,\sigma_r$ of $X$ such that $f(\sigma_1),\ldots,f(\sigma_r)$ have a point in common. For example, by Theorem \ref{main}, $(r-1)$-complementary $(d(r-1)-1)$-acyclic regular CW complexes are $(d,r)$-Tverberg. Let $X$ be a $(d,r)$-Tverberg regular CW complex. Observe that a regular CW complex $Y$ is $(d,r)$-Tverberg if either of the following conditions is satisfied:
\begin{enumerate}
  \item $X$ is a subcomplex of $Y$;

  \item $Y$ is a refinement of $X$, that is, $X\cong Y$ and each face of $X$ is the union of faces of $Y$.
\end{enumerate}
This observation leads us to:

\begin{definition}
  A $(d,r)$-Tverberg regular CW complex is called \emph{atomic} if it does not include a proper subcomplex which is $(d,r)$-Tverberg or it is not a refinement of a proper $(d,r)$-Tverberg complex.
\end{definition}

Here is a fundamental problem on $(d,r)$-Tverberg complexes.

\begin{problem}
  \label{atomic number}
  Given $d,r$ and $n$, are there only finitely many atomic $(d,r)$-Tverberg finite complexes of dimension $n$?
\end{problem}

First, we consider 1-dimensional $(1,2)$-Tverberg finite complexes. Let $C_n$ denote the cycle graph with $n$ vertices for $n\ge 3$. Then by Corollary \ref{Tverberg sphere}, $C_n$ is $(1,2)$-Tverberg. Let $Y$ be the $Y$-shaped graph depicted below. Then by the intermediate value theorem, we can see that $Y$ is $(1,2)$-Tverberg too.

\begin{figure}[htbp]
  \centering
  \begin{tikzpicture}[x=0.8cm, y=0.8cm, thick]
    \draw (0,0)--(0.86,0.5);
    \draw (0,0)--(-0.86,0.5);
    \draw (0,0)--(0,-1);
    \fill (0,0) circle [radius=2pt];
    \fill (0.86,0.5) circle [radius=2pt];
    \fill (-0.86,0.5) circle [radius=2pt];
    \fill (0,-1) circle [radius=2pt];
  \end{tikzpicture}
\end{figure}

\begin{proposition}
  \label{atomic (1,2)}
  The only atomic 1-dimensional $(1,2)$-Tverberg finite complexes are $C_3$ and $Y$.
\end{proposition}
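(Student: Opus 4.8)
The plan is to characterize all atomic 1-dimensional $(1,2)$-Tverberg finite complexes by first establishing which 1-complexes are $(1,2)$-Tverberg at all, and then extracting the minimal ones. A finite 1-dimensional regular CW complex is just a finite graph (possibly with multiple edges, but no loops since a loop is not a regular cell). I would first observe that a graph $G$ is \emph{not} $(1,2)$-Tverberg precisely when there is a continuous map $f\colon G\to\R$ that is injective on each pair of disjoint faces in the required sense—equivalently, by Lemma \ref{equivariant map}, when there is a $\Sigma_2$-map $\Conf_2(G)\to(\R)^2-\Delta\simeq S^0$. Since $\Conf_2(G)$ carries a free $\Z/2$-action (swapping the two coordinates; it is free because $\sigma_1\times\sigma_2$ with $\sigma_1\cap\sigma_2=\emptyset$ forces $\sigma_1\ne\sigma_2$), such a $\Sigma_2$-map exists if and only if $\Conf_2(G)$ is disconnected, i.e. $\widetilde H_0(\Conf_2(G))\ne 0$. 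So the combinatorial heart of the matter is: for which graphs $G$ is the deleted product $\Conf_2(G)$ connected?

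Next I would reduce to a short list of "forbidden" and "minimal" graphs. A graph $G$ fails to be $(1,2)$-Tverberg iff it embeds in $\R$ in the deleted-product sense, which classically (this is the planarity-type criterion for graphs on a line) happens iff $G$ is a disjoint union of paths, or more precisely iff $G$ has no subgraph that forces $\Conf_2$ to be connected. The two minimal obstructions to being "linear" are the cycle $C_3$ (the smallest cycle realizable as a regular CW complex) and the star $Y=K_{1,3}$: any graph with a cycle contains some $C_n$, and by Corollary \ref{Tverberg sphere} $C_n$ is already $(1,2)$-Tverberg, while one checks directly (or via $\widetilde H_0(\Conf_2(C_n))=0$) that $C_n$ for $n\ge 4$ contains a refinement-free subcomplex... here is where I must be careful: $C_n$ is a refinement of $C_3$ only if we subdivide edges, and subdivision of $C_3$ gives exactly $C_n$, so $C_n$ is \emph{not} atomic for $n\ge 4$ because it is a refinement of $C_3$; and $C_3$ has no proper subcomplex that is $(1,2)$-Tverberg (its proper subcomplexes are forests). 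Similarly $Y$ is $(1,2)$-Tverberg by the intermediate value theorem, its proper subcomplexes are paths (not $(1,2)$-Tverberg), and $Y$ is not a refinement of any smaller $(1,2)$-Tverberg complex since any complex it refines is a graph with $\le 3$ edges and at most the same first Betti number, leaving only $Y$ itself.

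Then I would argue that every $(1,2)$-Tverberg 1-complex $G$ contains a copy of $C_3$ or $Y$ as a subcomplex \emph{up to refinement}. The key dichotomy: if $G$ is a forest, then $G$ is a disjoint union of trees, and a tree maps to $\R$ by an embedding-on-disjoint-faces map iff it is a path; so a non-path tree contains a vertex of degree $\ge 3$, hence contains $Y$ as a subcomplex, and a path is not $(1,2)$-Tverberg. If $G$ is not a forest it contains a cycle, hence contains some $C_n$, which is a refinement of $C_3$. Conversely, if $G$ has first Betti number $0$ and maximum degree $\le 2$, then $G$ is a disjoint union of paths and is not $(1,2)$-Tverberg (explicitly, order $\R$ and send $G$ to $\R$ so that $\Conf_2(G)$ splits into $\{x<y\}$ and $\{x>y\}$). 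Combining: every $(1,2)$-Tverberg $G$ contains (after possibly refining) $C_3$ or $Y$, so the only atomic ones are $C_3$ and $Y$ themselves, since any strictly larger complex either contains one of them as a proper subcomplex or is a proper refinement of one.

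The main obstacle I anticipate is making the ``up to refinement'' bookkeeping in the atomicity definition fully rigorous: the definition says atomic means ($G$ has no proper $(1,2)$-Tverberg subcomplex) OR ($G$ is not a refinement of a proper $(1,2)$-Tverberg complex), so to \emph{exclude} a complex from the atomic list I must show it \emph{both} contains a proper $(1,2)$-Tverberg subcomplex \emph{and} is a refinement of a proper $(1,2)$-Tverberg complex—actually re-reading, atomic requires the negation, so a non-atomic complex is one that has a proper $(1,2)$-Tverberg subcomplex \emph{and} is a refinement of one; I would need to check that every $(1,2)$-Tverberg $G$ other than $C_3,Y$ satisfies both conditions, which requires exhibiting, for each such $G$, both a smaller Tverberg subcomplex (a subdivided $C_3$ or a $Y$, found inside via the cycle/degree argument) and realizing $G$ as a subdivision of a smaller Tverberg complex (contract subdivision vertices of degree $2$). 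Handling the degenerate cases (graphs with multi-edges, e.g. the "theta graph" or a doubled edge $C_2$—which is not a regular CW complex, so excluded) and confirming that contracting degree-$2$ vertices always lands on a genuine regular CW complex that is still $(1,2)$-Tverberg is the fiddly part; everything else is the clean homological input $\widetilde H_0(\Conf_2(G))=0 \iff G$ is a disjoint union of paths, which follows from Proposition \ref{Conf acyclicity} in one direction and a direct construction in the other.
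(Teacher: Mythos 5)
Your operative argument --- the forest/non-forest dichotomy, a non-path tree has a vertex of degree at least $3$ and hence contains $Y$, a graph with a cycle contains some $C_n$ which is a refinement of $C_3$, disjoint unions of paths embed in $\R$ and so are not $(1,2)$-Tverberg, and the minimality checks for $C_3$ and $Y$ --- is essentially the paper's proof. However, two of your supporting claims are wrong as stated. A $\Sigma_2$-map $\Conf_2(G)\to S^0$ does \emph{not} exist as soon as $\Conf_2(G)$ is disconnected: it exists iff no connected component of $\Conf_2(G)$ is invariant under the swap involution. For $G=C_3\sqcup P_2$ the deleted product is disconnected (it contains the swap-invariant hexagon $\Conf_2(C_3)$ as a component), yet $G$ is $(1,2)$-Tverberg; and $\Conf_2$ of a single path with two edges is already disconnected. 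So your ``clean homological input'' $\widetilde{H}_0(\Conf_2(G))=0\iff G$ is a disjoint union of paths fails in both directions, even after correcting the apparent sign slip. Fortunately you never actually use this criterion: you prove non-Tverbergness of path unions by an explicit map and Tverbergness of $C_n$ and $Y$ via Corollary \ref{Tverberg sphere} and the intermediate value theorem, so this is a misleading framing rather than a fatal flaw.

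The second point is a genuine missing case. The bigon $C_2$ (two vertices joined by two parallel edges) \emph{is} a regular CW complex: each closed $1$-cell has two distinct endpoints, so the characteristic maps are homeomorphisms onto their images; indeed it is the minimal regular structure $e^0_-\cup e^0_+\cup e^1_-\cup e^1_+$ on $S^1$ that the paper itself discusses in Section \ref{simplicial sphere}. Only loops are excluded by regularity. Hence you cannot discard multigraphs whose only cycles are bigons, and your dichotomy ``forest versus contains $C_n$ for some $n\ge 3$'' misses them. The paper treats this case explicitly: a finite $1$-complex that is not a forest but contains no $C_n$ with $n\ge3$ and no $Y$ is a disjoint union of paths with some edges doubled, and such a complex still admits a map to $\R$ separating all pairs of disjoint faces, because two parallel edges share their endpoints and therefore doubling an edge creates no new disjoint pairs. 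With that case added, your argument closes up and coincides with the paper's.
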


\begin{proof}
  Clearly, any proper subcomplex of $C_3$ and $Y$ is not $(1,2)$-Tverberg, and $C_3$ and $Y$ are not refinements of other regular CW complexes. Then $C_3$ and $Y$ are atomic $(1,2)$-Tverberg finite complex of dimension 1. Let $X$ be a $(1,2)$-Tverberg finite complex of dimension 1. Suppose $X$ is not a forest. Then it includes $C_n$ for some $n\ge 2$. If $X$ includes $C_n$ for some $n\ge 3$, then it is $(1,2)$-Tverberg. If $X$ includes only $C_2$, then $X$ is a disjoint union of finitely many path graphs with multiple edges. So $X$ is not $(1,2)$-Tverberg. Suppose $X$ is a forest. If $X$ does not include $Y$, then it is a disjoint union of finitely many path graphs, and so it is not $(1,2)$-Tverberg. Thus $X$ must include $Y$, completing the proof.
\end{proof}

\begin{remark}
  If we remove its center vertex of $Y$, then it becomes disconnected. Hence $Y$ is not 1-complementary 0-acyclic, so that we cannot apply Theorem \ref{main} for $d=1$ and $r=2$ to deduce that $Y$ is $(1,2)$-Tverberg. However, we can directly see $\wgt_{\Z/2}(\Conf_2(Y))=1$, implying that $Y$ is $(1,2)$-Tverberg, because $\Conf_2(Y)$ is a hexagon so that Lemma \ref{weight} applies.
\end{remark}

Next, we consider $(2,2)$-Tverberg polyhedral 2-spheres. Let $\partial\Delta^n$ denote the boundary of an $n$-simplex.

\begin{proposition}
  The only atomic $(2,2)$-Tverberg polyhedral 2-sphere is $\partial\Delta^3$.
\end{proposition}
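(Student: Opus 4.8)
The plan is to argue that $\partial\Delta^3$ is atomic, and then that any $(2,2)$-Tverberg polyhedral $2$-sphere that is not $\partial\Delta^3$ fails to be atomic because it refines, or contains, a smaller $(2,2)$-Tverberg polyhedral $2$-sphere. For atomicity of $\partial\Delta^3$: every proper subcomplex of $\partial\Delta^3$ is a subcomplex of a boundary of a triangle together with possibly an extra triangle sharing an edge, hence is either contractible or homotopy equivalent to a wedge of circles, and one checks directly (using the deleted product / weight computation as in Lemma \ref{weight}, or an explicit map to $\R^2$) that none of these is $(2,2)$-Tverberg; and $\partial\Delta^3$, being the boundary of a simplex, is not a proper refinement of any regular CW complex, since a simplex facet cannot be written as a union of smaller faces of a coarser complex on the same underlying sphere.

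For the converse, let $S$ be a $(2,2)$-Tverberg polyhedral $2$-sphere, i.e.\ the boundary of a convex $3$-polytope $P$, with $S\not\cong\partial\Delta^3$ as a CW complex. First I would dispose of the case where $S$ has a non-triangular face: then $S$ is a proper refinement of the polytopal sphere obtained by stacking a vertex inside a non-triangular face (or, more simply, $S$ is the refinement of the boundary of $P$ viewed with its polytopal — not simplicial — face structure, but since we are working with polyhedral spheres we instead triangulate and observe that $S$ refines a polytopal sphere with fewer faces). Cleanly: the boundary complex of $P$ with its natural (possibly non-simplicial) polytopal faces is $(2,2)$-Tverberg by the polytope case recalled in the introduction, and $S$ refines it, so $S$ is not atomic unless $S$ already equals that complex; and if some face is non-triangular, that complex is a proper $(2,2)$-Tverberg sphere, so $S$ is not atomic. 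Hence we may assume $S$ is simplicial. Now if $S$ is simplicial but has more than $4$ vertices, the idea is that $P$ has a vertex $v$ that is \emph{not} a vertex of $\Delta^3$ in the sense that deleting the star of $v$ and retriangulating (or: performing an inverse stellar subdivision / edge contraction) produces a smaller polytopal $2$-sphere $S'$ of which $S$ is a refinement; iterating reaches $\partial\Delta^3$. Since $\partial\Delta^3$ is $(2,2)$-Tverberg, so is every $S'$ in this chain, and $S$ refines $S'$, so $S$ is not atomic.

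The main obstacle will be the reduction step in the last paragraph: showing that every polytopal $2$-sphere with more than $4$ vertices is a \emph{refinement} of a strictly smaller polytopal $2$-sphere in the precise sense of the definition (namely $S\cong S'$ as CW complexes and each face of $S'$ is a union of faces of $S$). One has to be careful that an inverse stellar subdivision of a polytopal sphere is still polytopal — this is classical in dimension $2$ via Steinitz's theorem, since every $3$-connected planar graph is polytopal, and contracting an appropriate edge of a simplicial $3$-polytope keeps the graph $3$-connected and planar — and that the coarsening genuinely exhibits $S$ as a refinement, i.e.\ the underlying spaces match up cell-by-cell. A convenient way to package this: pick any vertex $v$ of $S$ of minimal degree; in a simplicial $2$-sphere with $\ge 5$ vertices there is a vertex of degree $3$, $4$ or $5$, and in the degree-$3$ case the three triangles around $v$ together with $v$ form a subdivided triangle, so $S$ is a refinement of the sphere $S'$ obtained by replacing that configuration with a single triangle; in the remaining cases one uses that some edge contraction keeps the sphere simplicial and polytopal and again realizes $S$ as a refinement. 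I expect the degree-$3$ case to be the clean generic mechanism and the other cases to require the Steinitz-theorem input; care is also needed to confirm that the "refinement" relation is exactly the one in the Definition, and that $\partial\Delta^3$ is the unique sink of this coarsening process. Once that structural fact is in hand, the proposition follows immediately from the two bullet observations preceding the Definition of atomic.
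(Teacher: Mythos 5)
Your overall architecture is the same as the paper's: show $\partial\Delta^3$ is atomic, and show every other polyhedral $2$-sphere is a refinement of a proper $(2,2)$-Tverberg complex. The paper, however, disposes of the second half in one line by citing two classical facts: Steinitz's theorem (every polyhedral $2$-sphere is polytopal) and the statement on p.~200 of Gr\"{u}nbaum's \emph{Convex Polytopes} that the boundary complex of every convex polytope is a refinement of the boundary of a simplex of the same dimension. Those two citations immediately give that any polyhedral $2$-sphere other than $\partial\Delta^3$ refines $\partial\Delta^3$, which is $(2,2)$-Tverberg by the topological Tverberg theorem. Your proposal tries to reprove the Gr\"{u}nbaum refinement fact from scratch, and that is where the genuine gap lies. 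First, an edge contraction does \emph{not} exhibit $S$ as a refinement of the contracted sphere $S'$ in the sense of the paper's definition: the triangle $wab$ of $S'$ obtained by merging $u$ and $v$ into $w$ is not a union of faces of $S$, so the coarsening relation you need simply fails for that move. Second, the one move that genuinely is an inverse refinement --- replacing the three triangles around a degree-$3$ vertex by a single triangle --- is not always available: the octahedron has all vertices of degree $4$ and the icosahedron all of degree $5$, so your induction does not close. You flag this as ``the main obstacle'' but do not resolve it; the resolution is to cite the classical result rather than rebuild it. Your non-simplicial case is also circular as written: a polyhedral $2$-sphere \emph{is} the boundary complex of a $3$-polytope with its polytopal face structure (that is exactly what Steinitz gives), so saying that $S$ refines ``the boundary of $P$ with its polytopal faces'' says only that $S$ refines itself.

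Two smaller points. For atomicity of $\partial\Delta^3$, the clean argument is that every proper subcomplex is contained in $\partial\Delta^3$ minus an open facet, hence embeds in $\R^2$, and an embedding witnesses failure of the $(2,2)$-Tverberg property; your appeal to the homotopy type (``contractible or a wedge of circles'') is not by itself the relevant invariant, though the conclusion is correct. Also, your claim that $\partial\Delta^3$ ``is not a proper refinement of any regular CW complex'' is not actually true: for instance, the regular CW sphere with $0$-cells $1,2$, $1$-cells $\{12\}$ and $\{13\}\cup\{3\}\cup\{32\}$, and the two complementary $2$-cells is coarser than $\partial\Delta^3$ in exactly the paper's sense. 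This does not damage the proposition because the paper's definition of atomicity is a disjunction and the subcomplex clause already applies, but the justification you give for that sub-claim does not hold up.
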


\begin{proof}
  By Steinitz's theorem, every polyhedral 2-sphere is polytopal, and as in \cite[p. 200]{G}, every polytopal sphere is a refinement of the boundary of a simplex. On the other hand, $\partial\Delta^3$ is $(2,2)$-Tverberg by the topological Tverberg theorem, and each proper subcomplex of $\partial\Delta^3$ is not $(2,2)$-Tverberg. Thus the proof is done.
\end{proof}

The 2-sphere in Figure \ref{non-polyhedral} is an atomic $(2,2)$-Tverberg complex, and there may be other atomic $(2,2)$-Tverberg 2-spheres which are not polyhedral. Then we pose a problem much weaker than Problem \ref{atomic number} but still interesting.

\begin{problem}
  Are there only finitely many atomic $(2,2)$-Tverberg 2-spheres?
\end{problem}

\end{document}